\makeindex \setcounter{tocdepth}{2}
\theoremstyle{plain}
\newtheorem{theorem}{Theorem}[section]
\newtheorem{corollary}[theorem]{Corollary}
\newtheorem{lemma}[theorem]{Lemma}
\newtheorem{question}[theorem]{Question}
\theoremstyle{definition}
\newtheorem{remark}[theorem]{Remark}
\def\e{\mathbf{e}}
\def\bF{\mathbb{F}}
\def\bK{\mathbb{K}}
\def\bL{\mathbb{L}}
\def\bP{\mathbb{P}}
\def\bQ{\mathbb{Q}}
\def\bZ{\mathbb{Z}}
\def\C{\mathbf{C}}
\def\D{\mathbf{D}}
\def\I{\mathbf{I}}
\def\cA{\mathcal{A}}
\def\cB{\mathcal{B}}
\def\cD{\mathcal{D}}
\def\cE{\mathcal{E}}
\def\cH{\mathcal{H}}
\def\cK{\mathcal{K}}
\def\cM{\mathcal{M}}
\def\cO{\mathcal{O}}
\def\cP{\mathcal{P}}
\def\cS{\mathcal{S}}
\def\cX{\mathcal{X}}
\def\fB{\mathfrak{B}}
\def\fQ{\mathfrak{Q}}
\def\fp{\mathfrak{p}}
\def\fq{\mathfrak{q}}
\def\deg{\mathrm{deg}}
\def\Gal{\mathrm{Gal}}
\def\GL{\mathrm{GL}}
\def\Card{\mathbf{Card}}
\def\alg{\mathbf{alg}}
\def\e{\mathbf{e}}
\def\Dr{\mathbf{DR}}
\begin{document}

\title{A Carlitz module analogue of the Grunwald--Wang theorem}

\author{Nguyen Ngoc Dong Quan}

\date{July 27, 2014}

\address{Department of Mathematics \\
         University of British Columbia \\
         Vancouver, British Columbia \\
         V6T 1Z2, Canada}

\email{\href{mailto:dongquan.ngoc.nguyen@gmail.com}{\tt dongquan.ngoc.nguyen@gmail.com}}

\maketitle

\tableofcontents

\section{Introduction}

The classical Grunwald--Wang theorem is an example of a local--global (or Hasse) principle stating that except in some \textit{special} cases which are precisely determined, an element $m$ in a number field $\bK$ is an $a$-th power in $\bK$ if and only if it is an $a$-th power in the completion $\bK_{\wp}$ of $\bK$ for all but finitely many primes $\wp$ of $\bK$. In more concrete terms, the equation $x^a = m$ has a solution $x_{\bK}$ in $\bK$ if and only if it has a solution $x_{\wp}$ in the completion $\bK_{\wp}$ for all but finitely many primes $\wp$ of $\bK$. The aim of this article is to prove a Carlitz module analogue of the Grunwald--Wang theorem.

Throughout the paper, let $A = \bF_q[T], k = \bF_q(T)$, where $q$ is a power of a prime $p$, and let $k^{\alg}$ be the algebraic closure of $k$. Let $\tau$ be the map defined by $\tau(x) = x^q$. Let $k\langle \tau \rangle $ denote the twisted polynomial ring equipped with twisted multiplication, namely, $\tau a = a^q \tau$ for all $a \in k$. Let $C$ be the Carlitz module, i.e., $C : A \rightarrow k\langle \tau \rangle$ be an $\bF_q$-algebra homomorphism such that $C_T = T + \tau$. Note that for every commutative $k$-algebra $\D$, the action of $\sum_{j}a_i \tau^j \in k\langle \tau \rangle$ on an element $d \in \D$ is defined by
\begin{align*}
(\sum_{j}a_i \tau^j)(d) = \sum_{j}a_i d^{q^j}.
\end{align*}

A special case of an analogue of the Grunwald--Wang theorem for function fields was proved by G.-J. van der Heiden \cite{van-der-Heiden}. We now describe the result of van der Heiden. Let $\cX$ be a projective, smooth, geometrically irreducible curve over the finite field $\bF_q$, and let $\infty$ be a fixed closed point on $\cX$. Let $\bF_q(\cX)$ be the function field of $\cX$, and let $\cO_{\bF_q(\cX)}$ be the ring of functions in $\bF_q(\cX)$ that are regular outside $\infty$. Let $\cK$ be a finite separable extension of $\bF_q(\cX)$, and let $\iota : \cO_{\bF_q(\cX)} \rightarrow \cK$ be the natural embedding of $\cO_{\bF_q(\cX)}$ into $\cK$. Let $\Dr : \cO_{\bF_q(\cX)} \rightarrow \cK\langle \tau \rangle$ be a Drinfeld module over $\cK$ of rank 1, that is, $\Dr$ is an $\bF_q$-algebra homomorphism such that for all $f \in \cO_{\bF_q(\cX)}$, we have
\begin{align*}
\Dr_f = \sum_{j = 0}^{\deg(f)}f_i\tau^i,
\end{align*}
where the $f_i$ are elements in $\cK$ with $f_{\deg(f)} \in \cK^{\times}$ and $f_0 = \iota(f)$. In \cite{van-der-Heiden}, Van der Heiden proved the following result.
\begin{theorem}
\label{Theorem-Van-der-Heiden-theorem}
$(\text{van der Heiden, \cite[Proposition {\bf5}]{van-der-Heiden}})$

Let $P$ be a prime element of $\cO_{\bF_q(\cX)}$, and let $m$ be an element in $\cK$. Then the equation $\Dr_P(x) = m$ has a solution $x_{\cK}$ in $\cK$ if and only if it has a solution $x_{\wp}$ in the completion $\cK_{\wp}$ for every place $\wp$ of $\cK$.

\end{theorem}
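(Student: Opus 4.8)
The plan is to recast the statement cohomologically and reduce it to the vanishing of a Tate--Shafarevich group. First I would note that the additive polynomial $\Dr_P(X) = \sum_{i=0}^{\deg(P)} f_i X^{q^i}$ is separable, since its derivative is the constant $f_0 = \iota(P) \in \cK^{\times}$; hence $\Dr_P \colon \mathbb{G}_a \to \mathbb{G}_a$ is surjective on $\cK^{\mathrm{sep}}$-points, and writing $\Dr[P]$ for its kernel we obtain a short exact sequence of $\Gal(\cK^{\mathrm{sep}}/\cK)$-modules
\begin{align*}
0 \longrightarrow \Dr[P] \longrightarrow \mathbb{G}_a(\cK^{\mathrm{sep}}) \stackrel{\Dr_P}{\longrightarrow} \mathbb{G}_a(\cK^{\mathrm{sep}}) \longrightarrow 0 .
\end{align*}
Because $H^1(\cK, \mathbb{G}_a) = 0$ (the additive form of Hilbert~90, valid also over each completion), the long exact sequence gives canonical isomorphisms $\cK/\Dr_P(\cK) \xrightarrow{\ \sim\ } H^1(\cK, \Dr[P])$ and $\cK_{\wp}/\Dr_P(\cK_{\wp}) \xrightarrow{\ \sim\ } H^1(\cK_{\wp}, \Dr[P])$ carrying the class of $m$ to the image of $m$; under these the theorem becomes exactly the assertion that
\begin{align*}
\sha^1(\cK, \Dr[P]) := \ker\Bigl( H^1(\cK, \Dr[P]) \longrightarrow \prod_{\wp} H^1(\cK_{\wp}, \Dr[P]) \Bigr) = 0 .
\end{align*}

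Next I would analyze the module $\Dr[P]$. Since $\Dr$ has rank $1$ and $P$ is prime, $\Dr[P]$ is free of rank $1$ over the finite field $\cO/P\cO \cong \bF_{q^{\deg(P)}}$; in particular it is an $\bF_q$-vector space, hence an elementary abelian $p$-group. The Galois action commutes with the $\cO$-action, so it embeds $G := \Gal(L/\cK)$, where $L := \cK(\Dr[P])$, into $\mathrm{Aut}_{\cO}(\Dr[P]) = (\cO/P\cO)^{\times}$. Thus $G$ is cyclic and $|G|$ divides $q^{\deg(P)}-1$, which is prime to $p$. This is precisely where the situation is strictly better than in the classical Grunwald--Wang theorem: the relevant ``cyclotomic'' group $(\cO/P\cO)^{\times}$ is always cyclic, so there is no analogue of the exceptional special case.

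Finally I would run inflation--restriction. As $\Gal(\cK^{\mathrm{sep}}/L)$ acts trivially on $\Dr[P]$, the kernel of restriction $H^1(\cK, \Dr[P]) \to H^1(L, \Dr[P])$ is $H^1(G, \Dr[P])$, and since $|G|$ and the exponent $p$ of $\Dr[P]$ are coprime this group vanishes; hence restriction is injective, and it visibly carries $\sha^1(\cK, \Dr[P])$ into $\sha^1(L, \Dr[P])$. But over $L$ the module $\Dr[P]$ is a \emph{constant} finite $\bF_q$-vector space, so $H^1(L, \Dr[P]) = \mathrm{Hom}_{\mathrm{cont}}(\Gal(\cK^{\mathrm{sep}}/L), \Dr[P])$, and a homomorphism lying in $\sha^1$ kills every decomposition group, i.e. corresponds to a finite abelian extension of $L$ in which every place splits completely; by Chebotarev such an extension is trivial, so $\sha^1(L, \Dr[P]) = 0$. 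Therefore $\sha^1(\cK, \Dr[P]) = 0$, which is the theorem.

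The only real obstacle is the bookkeeping in the first and last steps: verifying the separability and surjectivity of $\Dr_P$, checking $H^1(\cK_{\wp}, \mathbb{G}_a) = 0$, and making the identifications $\cK/\Dr_P(\cK) \cong H^1(\cK, \Dr[P])$ and their local analogues precise and compatible with localization. Once the problem has been put in the shape ``$\sha^1(\cK, \Dr[P]) = 0$'' with $[L:\cK]$ prime to $p$, the argument collapses, so I would expect the bulk of the write-up to be spent on these identifications rather than on any genuinely difficult step.
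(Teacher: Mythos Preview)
The paper does not supply its own proof of this statement: Theorem~\ref{Theorem-Van-der-Heiden-theorem} is quoted verbatim from \cite[Proposition~{\bf 5}]{van-der-Heiden} and is used only as motivation. So there is no in-paper proof to compare your proposal against. That said, the paper explicitly remarks that ``the ideas of this paper are completely different from those used in \cite{van-der-Heiden} in which G.-J.\ van der Heiden extensively used Galois cohomology,'' and your proposal is precisely a Galois-cohomological argument: reformulate the local--global statement as $\sha^1(\cK,\Dr[P])=0$, note that $\Dr[P]$ is a one-dimensional $(\cO/P\cO)$-vector space so that $\Gal(\cK(\Dr[P])/\cK)\hookrightarrow(\cO/P\cO)^{\times}$ has order prime to $p$, and then combine inflation--restriction with Chebotarev. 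This lines up with the method the paper attributes to van~der~Heiden, and each step you outline is sound; the bookkeeping you flag (separability, additive Hilbert~90 globally and locally, compatibility of the connecting maps with localization) is routine.

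By contrast, the paper's own results (Theorem~\ref{Theorem-The-description-of-the-splitting-field-of-the-Carlitz-equation} through Theorem~\ref{Theorem-A-Carlitz-module-analogue-of-the-Grunwald-Wang-theorem}) avoid cohomology entirely: they argue directly with the explicit matrix representation $\Psi:\Gal(\bL/\bK)\hookrightarrow\GL_2(A/aA)$ of the splitting field, and use Tchebotarev to force $\Sigma\cap\cM$ to be trivial. Your cohomological route is shorter and more conceptual for the prime-power case, and it makes transparent why no ``special case'' arises (the group $(\cO/P\cO)^{\times}$ is cyclic of order prime to $p$). The paper's explicit approach, on the other hand, is tailored to handle arbitrary $a\in A$ over extensions $\bK/k$ and to extract the quantitative density threshold $1-1/\phi(a)q^{\deg(a)}$, which your argument does not immediately yield.
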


Since $P$ is a prime element of $\cO_{\bF_q(\cX)}$ and the equation $\Dr_P(x) = m$ in Theorem \ref{Theorem-Van-der-Heiden-theorem} is considered over the field of definition of $\Dr$, namely $\cK$, we see that Theorem \ref{Theorem-Van-der-Heiden-theorem} might be viewed as analogous to the following special case of the classical Grunwald--Wang theorem: \textit{the equation $x^p = m$ with prime $p$ in $\bZ$ and $m \in \bQ$ has a solution $x_{\bQ}$ in $\bQ$ if and only if it has a solution $x_l$ in the $l$-adic field $\bQ_l$ for every prime $l$.}

To better understand the analogies between Theorem \ref{Theorem-Van-der-Heiden-theorem} and the above statement, one can consider the case when $\cX = \bP_1/\bF_q$. Thus $\cK = k, \cO_{\bF_q(\cX)} = A$, and the Drinfeld module $\Dr$ is the Carlitz module $C$ that was introduced earlier. There are well-known analogies \cite{Carlitz} \cite{Goss} \cite{Hayes} \cite{Rosen} \cite{Thakur-book} between the map $x \mapsto x^a$ with $a \in \bZ$ and the map $x \mapsto C_a(x)$ with $a \in A$; for example, adjoining torsion points of the Carlitz module to the field $k$ generates cyclotomic function fields in a similar manner as cyclotomic extensions of $\bQ$ obtained by adjoining roots of unity to $\bQ$. Hence Theorem \ref{Theorem-Van-der-Heiden-theorem} can be regarded as a special case of the classical Grunwald--Wang theorem.

In the hope of obtaining the best possible analogy with the classical Grunwald--Wang theorem, we will have to be content with working with the Carlitz module over $k$ because, among other reasons, the ring $\cO_{\bF_q(\cX)}$ might have non-principal ideals and the analogies between $(A, k)$ and $(\bZ, \bQ)$ are stronger. It is natural to search for a generalization of Theorem \ref{Theorem-Van-der-Heiden-theorem} that might be viewed as the full Grunwald--Wang theorem in the function field setting.
\begin{question}
\label{Question-An-analogue-of-the-Grunwald-Wang-theorem-for-function-fields}

Let $\bK$ be a finite separable extension of $k$. Let $a$ be an element in $A$ of positive degree, and let $m$ be an element of $\bK$. Consider the equation
\begin{align*}
C_a(x) = m.
\end{align*}
If $C_a(x) = m$ has a solution $x_{\wp}$ in the completion $\bK_{\wp}$ for all but finitely many primes $\wp$ of $\bK$, is it true that the equation $C_a(x) = m$ has a solution $x_{\bK}$ in $\bK$?

\end{question}

An affirmative answer to Question \ref{Question-An-analogue-of-the-Grunwald-Wang-theorem-for-function-fields} should give rise to a result that is analogous to the classical Grunwald--Wang theorem in the number field case, and is a generalization of the result of van der Heiden in the Carlitz module setting. Indeed, in contrast to Theorem \ref{Theorem-Van-der-Heiden-theorem}, although the field of definition of the Carlitz module is $k$, we consider the equation $C_a(x) = m$ over any finite separable extension of $k$. Furthermore $a$ can be any element in $A$ of positive degree, and we \textit{only} require that the equation $C_a(x) = m$ is locally solvable at \textit{all but finitely many primes} of $\bK$. In comparison with the number field case, the classical Grunwald--Wang theorem considers global solvability of the equation $x^a = m$ over a number field although the power map $x^a$ with $a \in \bZ$ is defined over $\bQ$.

Let $\Lambda_a \subset k^{\alg}$ be the cyclic $A$-module defined by $\Lambda_{a} = \left\{\lambda \in k^{\alg} \; | \; C_a(x) = 0 \right\}$. The following theorem is a summary of the main results in this paper.
\begin{theorem}
\label{Theorem-A-summary-of-the-main-results}

Question \ref{Question-An-analogue-of-the-Grunwald-Wang-theorem-for-function-fields} has an affirmative answer in the following cases:
\begin{itemize}

\item [(i)] $\bK$ contains a primitive generator of the cyclic $A$-module $\Lambda_a$.

\item [(ii)] $\bK$ is a Galois extension of $k$ such that every prime $P$ dividing $a$ is unramified in $\bK$.

\end{itemize}

\end{theorem}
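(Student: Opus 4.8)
The plan is to reduce both cases to a purely cohomological statement about the Galois module $\Lambda_a$, in direct analogy with how the classical Grunwald--Wang theorem is extracted from the vanishing (or explicit computation) of a Shafarevich--Tate group attached to $\mu_a$. Fix $m \in \bK$ and let $L = \bK(\Lambda_a, C_a^{-1}(m))$ be the field obtained by adjoining to $\bK$ a full set of torsion points $\Lambda_a$ together with one preimage $x_0$ of $m$ under $C_a$; since $C_a$ is a separable additive polynomial of degree $q^{\deg a}$ and its roots form the $\bF_q$-vector space $\Lambda_a$, the set $C_a^{-1}(m) = x_0 + \Lambda_a$ is a torsor under $\Lambda_a$, and $L/\bK$ is a finite Galois extension whose group $G$ embeds into the affine group $\Lambda_a \rtimes \mathrm{Aut}_A(\Lambda_a)$. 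Solvability of $C_a(x) = m$ in $\bK$ is equivalent to the splitting of the corresponding class in $H^1(\bK, \Lambda_a)$ (a torsor class), and local solvability at $\wp$ is the vanishing of its image in $H^1(\bK_\wp, \Lambda_a)$. Thus the whole question is whether an element of $H^1(\bK,\Lambda_a)$ that dies in $H^1(\bK_\wp,\Lambda_a)$ for all but finitely many $\wp$ — hence, by a Chebotarev/ramification argument, for \emph{all} $\wp$ — must itself vanish; that is, whether the relevant $\sha^1(\bK, \Lambda_a)$ is trivial.

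In case (i), $\bK$ contains a primitive generator $\lambda$ of $\Lambda_a$, so $\Lambda_a \cong A/aA$ as a $\Gal(\overline{\bK}/\bK)$-module with \emph{trivial} action, exactly the situation $\mu_a \subset \bK$ in the number-field story. Here I would argue directly: $x + \Lambda_a \mapsto x$ identifies $C_a^{-1}(m)/\Lambda_a$-torsors with elements of $\bK/C_a(\bK)$, and by Kummer-type theory for the Carlitz module $H^1(\bK,\Lambda_a) \cong \bK/C_a(\bK)$, with the local conditions picking out $\bigcap_{\wp}\big(\bK \cap C_a(\bK_\wp)\big)/C_a(\bK)$. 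The claim becomes: if $m \in C_a(\bK_\wp)$ for almost all $\wp$ then $m \in C_a(\bK)$. One proves this by the standard Grunwald--Wang mechanism — the extension $\bK(x_0)/\bK$ is abelian of exponent dividing $a$ (because $\Lambda_a \subset \bK$), it is unramified outside the finitely many bad primes and the primes dividing $a$, and local triviality at almost all $\wp$ forces, via Chebotarev, that the cyclic extension $\bK(x_0)/\bK$ has no nontrivial Frobenius, hence is trivial, i.e. $x_0 \in \bK$. The only subtlety, the genuine ``special case'' exception in the number-field theorem, does not arise here: it is tied to the failure of $\mathbb{Q}_2^\times/(\mathbb{Q}_2^\times)^8$ to be cyclic, a phenomenon with no counterpart for the additive, $\bF_q$-linear polynomial $C_a$, whose local value groups $\bK_\wp/C_a(\bK_\wp)$ are always elementary abelian $\bF_q$-vector spaces; I expect this to be the point requiring the most care to state cleanly.

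In case (ii), $\bK/k$ is Galois and every prime $P \mid a$ is unramified in $\bK$. Now $\Lambda_a$ need not be a trivial module, but the hypothesis is exactly what is needed to control ramification: the extension $\bK(\Lambda_a)/\bK$ is tamely ramified precisely at the primes above the $P \mid a$, and unramifiedness of those primes in $\bK/k$ lets me transport the known structure of the cyclotomic function field $k(\Lambda_a)/k$ (a totally ramified-at-each-$P$, everywhere-else-unramified abelian extension with group $(A/aA)^\times$) up to $\bK$ without collision. The strategy is to base-change van der Heiden's Theorem~\ref{Theorem-Van-der-Heiden-theorem}, or rather its method, from $k$ to $\bK$: first treat $a = P$ a single prime by reducing, via the unramifiedness hypothesis and the compositum $\bK \cdot k(\Lambda_P)$, to a Kummer situation over $\bK(\Lambda_P)$ where case (i) applies, and then descend using that $\mathrm{Gal}(\bK(\Lambda_P)/\bK) \hookrightarrow (A/PA)^\times$ has order prime to $p$, so the relevant $H^1$ of this group vanishes and a local-global class for $\Lambda_P$ over $\bK$ is detected by one over $\bK(\Lambda_P)$. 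For general $a$, I would factor $a = \prod P_i^{e_i}$ and use the $A$-module decomposition $\Lambda_a = \bigoplus_i \Lambda_{P_i^{e_i}}$ to reduce to prime-power $a$, then filter $\Lambda_{P^e}$ by $\Lambda_P \subset \Lambda_{P^2} \subset \cdots$ and induct on $e$, at each stage using the snake lemma and the vanishing just established. The main obstacle I anticipate is the descent step: controlling $H^1$ and $\sha^1$ under the non-trivial twist by $\mathrm{Gal}(\bK(\Lambda_a)/\bK)$, and in particular verifying that the ``almost all primes'' hypothesis genuinely propagates to ``all primes'' for the twisted module — this is where one must combine the unramifiedness of the $P \mid a$ with a careful Chebotarev argument over $\bK(\Lambda_a)$, and it is the technical heart of case (ii).
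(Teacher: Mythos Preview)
Your approach is genuinely different from the paper's: you work cohomologically via $H^1(\bK,\Lambda_a)$ and a Shafarevich--Tate group, whereas the paper argues explicitly with a matrix embedding $\Gal(\bL/\bK)\hookrightarrow\GL_2(A/aA)$ and deliberately avoids cohomology (the introduction notes that its method is ``completely different'' from van der Heiden's cohomological one). For case~(i) your outline is correct and amounts to the same Chebotarev fixed-point argument the paper gives; one quibble is that $\bK(x_0)/\bK$ is abelian but not cyclic, since $\Lambda_a$ is cyclic only as an $A$-module, not as an abelian group.

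For case~(ii) there is a real gap. Your descent for $a=P$ prime via $H^1(\Gal(\bK(\Lambda_P)/\bK),\Lambda_P)=0$ is fine (and, notably, does not use the unramifiedness hypothesis at all, since $|(A/PA)^\times|=q^{\deg P}-1$ is prime to $p$ regardless). But for $a=P^e$ with $e\ge 2$ neither of your proposed steps goes through. Direct descent fails because $(A/P^eA)^\times$ has a $p$-part of order $|P|^{e-1}$, so $H^1$ of $\Gal(\bK(\Lambda_{P^e})/\bK)$ with coefficients in $\Lambda_{P^e}$ is not killed by the prime-to-$p$ order argument. And the filtration induction does not close: from $0\to\Lambda_{P^{e-1}}\to\Lambda_{P^e}\to\Lambda_P\to 0$, a class $\xi\in\sha^1(\bK,\Lambda_{P^e})$ has trivial image in $\sha^1(\bK,\Lambda_P)=0$ and hence lifts to some $\eta\in H^1(\bK,\Lambda_{P^{e-1}})$, but locally you only get that $\eta_\wp$ lies in the image of the boundary map $\Lambda_P(\bK_\wp)\to H^1(\bK_\wp,\Lambda_{P^{e-1}})$, not that $\eta_\wp=0$; so you cannot apply the inductive hypothesis to $\eta$. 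You flag the descent as ``the technical heart'' but supply no mechanism, and in particular never say concretely how the unramifiedness of $P$ in $\bK$ enters your argument.

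The paper's argument for case~(ii) bypasses all of this with a local trick that uses unramifiedness in an essential way: it forces any prime $\fq$ of $\bK$ above $P$ to be \emph{totally ramified} in $\bK(\Lambda_{P^e})$, so the decomposition group at $\fq$ is the full $\Gal(\bK(\Lambda_{P^e})/\bK)$. Since (by the case~(i) mechanism applied to $C_{P^e}(z)=m$) the splitting field of $C_{P^e}(z)-m$ over $\bK$ is exactly $\bK(\Lambda_{P^e})$, the local solution at $\fq$ lies in $\bK_\fq\cap\bK(\Lambda_{P^e})=\bK$. These $\bK$-rational solutions of $C_{P_i^{e_i}}(z)=m$, one for each prime power $P_i^{e_i}\Vert a$, are then glued via the Chinese Remainder Theorem to produce a global solution of $C_a(x)=m$. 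This is the missing idea in your outline.
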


Part $(i)$ of Theorem \ref{Theorem-A-summary-of-the-main-results} is the content of Corollary \ref{Corollary-A-special-case-of-a-Carlitz-module-analogue-of-the-Grunwald-Wang-theorem}, and part $(ii)$ is the content of Theorem \ref{Theorem-A-Carlitz-module-analogue-of-the-Grunwald-Wang-theorem}. Note that Theorem \ref{Theorem-A-Carlitz-module-analogue-of-the-Grunwald-Wang-theorem} and Corollary \ref{Corollary-A-special-case-of-a-Carlitz-module-analogue-of-the-Grunwald-Wang-theorem} actually are more general than Theorem \ref{Theorem-A-summary-of-the-main-results}. More precisely, in the statements of Theorem \ref{Theorem-A-Carlitz-module-analogue-of-the-Grunwald-Wang-theorem} and Corollary \ref{Corollary-A-special-case-of-a-Carlitz-module-analogue-of-the-Grunwald-Wang-theorem}, instead of assuming local solvability of the equation $C_a(x) = m$ at almost all primes in $\bK$, we only assume that $C_a(x) = m$ has a solution $x_{\wp}$ in the completion $\bK_{\wp}$ for all primes $\wp$ in a set of primes of $\bK$ of Dirichlet density greater than $1 - 1/\phi(a)q^{\deg(a)}$. Here $\phi(a)$ is the function field analogue of the Euler $\phi$-function.

Let us say a few words about the ideas of the proof of Theorem \ref{Theorem-A-summary-of-the-main-results}. In \cite{Ono-Terasoma}, T. Ono and T. Terasoma attempted to give a different proof of a special case of the classical Grunwald--Wang theorem. Unfortunately the statement of a special case of the Grunwald--Wang theorem as well as its attempted proof given in \cite{Ono-Terasoma} were erroneous as pointed out by T. Ono \cite{Ono}. Despite the errors in \cite{Ono-Terasoma}, the underlying ideas in that paper can be modified to apply to the function field setting. We will carry out such modifications in the proof of Theorem \ref{Theorem-A-summary-of-the-main-results} whose main ingredients are the use of Tchebotarev density theorem and basic facts in class field theory over function fields. Thus the ideas of this paper are completely different from those used in \cite{van-der-Heiden} in which G.-J. van der Heiden extensively used Galois cohomology.

\section{The splitting field of the polynomial $C_a(x) - m$}
\label{Section-The-splitting-field-of-the-polynomial-C-a(x)-=-m}

Let $\bK$ be a finite, separable extension of $k$. Let $a$ be an element in $A$, and let $m$ be an element in $\bK$. In this section, we will describe the splitting field of the polynomial $C_a(x) - m \in \bK[x]$ over $\bK$ when the equation $C_a(x) = m$ is locally solvable at all primes in a set of primes of $\bK$ of Dirichlet density greater than $1 - 1/\phi(a)q^{\deg(a)}$. The case that the equation $C_a(x) = m$ has a solution $x_{\wp}$ in $\bK_{\wp}$ for almost all primes in $\bK$ follows immediately as a special case. The following result is very elementary and can be derived from the results in \cite[Chapter {\bf12}]{Rosen}. For the sake of self-containedness, we also include a proof here.

\begin{lemma}
\label{Lemma-Description-of-elements-in-C-a-lambda}

Let $a$ be a polynomial of positive degree $s$ in $A$. Let $\Lambda_a \subset k^{\alg}$ be the $A$-module defined by
\begin{align*}
\Lambda_a = \left\{\lambda \in k^{\alg} \; | \; C_a(\lambda) = 0 \right\},
\end{align*}
and let $\lambda_a$ be a generator for the cyclic $A$-module $\Lambda_a$ $(\text{see \cite[Chapter {\bf12}]{Rosen}})$. Then
\begin{itemize}

\item [(i)] $C_b(\lambda_a) \ne C_c(\lambda_a)$ for every $b, c \in A$ with $b \ne c$ and $\max(\deg(b), \deg(c)) < s$; and

\item [(ii)] $\Lambda_a = \{C_b(\lambda_a) \; | \; \text{$b \in A$ with $\deg(b) < s$}\}$.

\end{itemize}

\end{lemma}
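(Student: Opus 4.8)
The plan is to reduce everything to a counting/dimension argument together with the basic structure of the Carlitz module. Recall that $C_a(x)$ is an $\bF_q$-linear polynomial in $x$ of degree $q^s$, separable (since its linear coefficient is $a \ne 0$ as $\deg(a) = s > 0$), so $\Lambda_a$ is an $\bF_q$-vector space of dimension $s$; by the known structure theory (\cite[Chapter {\bf12}]{Rosen}) it is a cyclic $A$-module, and $\lambda_a$ is a generator, meaning $\Lambda_a = \{C_b(\lambda_a) \;|\; b \in A\}$. The annihilator of $\lambda_a$ in $A$ is the ideal $(a)$: indeed $C_b(\lambda_a) = 0$ forces $C_b$ to kill the whole cyclic module $\Lambda_a$, hence $a \mid b$ (equivalently, the $A$-module map $A/(a) \to \Lambda_a$ sending $\bar b \mapsto C_b(\lambda_a)$ is an isomorphism, both sides being $\bF_q$-vector spaces of dimension $s$, once surjectivity — which is the definition of "generator" — is granted).

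First I would prove $(i)$. Suppose $b, c \in A$ with $\max(\deg(b),\deg(c)) < s$ and $C_b(\lambda_a) = C_c(\lambda_a)$. Since $C$ is an $\bF_q$-algebra homomorphism and $C_\bullet$ acts $\bF_q$-linearly, $C_{b-c}(\lambda_a) = 0$, so $a \mid (b-c)$ by the annihilator computation above. But $\deg(b-c) \le \max(\deg(b),\deg(c)) < s = \deg(a)$, and the only multiple of $a$ of degree $< \deg(a)$ is $0$; hence $b = c$, proving $(i)$.

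Next I would prove $(ii)$. The set $S := \{C_b(\lambda_a) \;|\; b \in A,\ \deg(b) < s\}$ is contained in $\Lambda_a$ since $C_a(C_b(\lambda_a)) = C_{ab}(\lambda_a) = C_b(C_a(\lambda_a)) = 0$. For the reverse inclusion, note that $S$ has exactly $q^s$ elements: the polynomials $b \in A$ with $\deg(b) < s$ form an $\bF_q$-vector space of dimension $s$, hence $q^s$ elements, and the map $b \mapsto C_b(\lambda_a)$ is injective on this set by $(i)$. Since $|\Lambda_a| = q^s$ as well (the separable polynomial $C_a(x)$ has $q^s$ distinct roots), the inclusion $S \subseteq \Lambda_a$ is an equality, proving $(ii)$.

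The only real point requiring care — and the step I would flag as the crux — is the identification of the annihilator of $\lambda_a$ with $(a)$, or equivalently the fact that $b \mapsto C_b(\lambda_a)$ induces an isomorphism $A/(a) \xrightarrow{\sim} \Lambda_a$; this is exactly where the hypothesis that $\lambda_a$ is a \emph{generator} of the cyclic module (not merely some element) is used, and where one invokes $\deg$ to rule out low-degree multiples of $a$. Everything else is the separability of $C_a(x)$ (immediate from $C_a'(x) = a \ne 0$, since $C_a(x) = a x + (\text{higher } \tau\text{-terms})$) and elementary counting; alternatively, if one prefers not to cite \cite[Chapter {\bf12}]{Rosen} for cyclicity, one can take $(ii)$'s inclusion-and-counting argument as the proof that $S = \Lambda_a$ directly and deduce that $\lambda_a$ generates, but since the lemma already posits $\lambda_a$ as a generator I would simply use that.
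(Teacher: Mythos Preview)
Your proof is correct and follows essentially the same approach as the paper: both reduce part~$(i)$ to showing that $C_{b-c}(\lambda_a)=0$ with $\deg(b-c)<s$ forces $b=c$, and both obtain part~$(ii)$ from~$(i)$ by a cardinality count against $|\Lambda_a|=q^s$. The only cosmetic difference is that the paper argues via the containment $\Lambda_a\subseteq\Lambda_{b-c}$ and a cardinality contradiction (splitting off the case $b-c\in\bF_q^\times$), whereas you phrase the same step as identifying the annihilator of $\lambda_a$ with $(a)$ and invoking $\deg(b-c)<\deg(a)$ directly.
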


\begin{proof}

Part $(ii)$ follows immediately from part $(i)$. We now prove part $(i)$. Let $b, c$ be distinct elements in $A$ with $\max(\deg(b), \deg(c)) < s$. Assume that $C_b(\lambda_a) = C_c(\lambda_a)$. It then follows that
\begin{align*}
C_{b - c}(\lambda_a) = 0.
\end{align*}
Since $b \ne c$, we see that $b - c \ne 0$, and hence $0 \le \deg(b - c) < s$. If $\deg(b - c) = 0$, we see that $b - c$ belongs to $\bF_q^{\times}$, and hence $C_{b - c}(\lambda_a) = (b - c)\lambda_a = 0$. Thus $\lambda_a = 0$, which is a contradiction.

If $\deg(b - c) > 0$, we know from \cite[Proposition {\bf12.4}]{Rosen} that $\Lambda_{b - c}$ has exactly $q^{\deg(b - c)}$ elements. Since $C_{b - c}(\lambda_a) = 0$ and $\lambda_a$ is a generator of the $A$-module $\Lambda_a$, we deduce that $\Lambda_a \subseteq \Lambda_{b - c}$. Thus
\begin{align*}
q^s = q^{\deg(a)} = \#\Lambda_a \le \#\Lambda_{b - c} = q^{\deg(b - c)},
\end{align*}
and therefore $s \le \deg(b - c)$, which is a contradiction. Therefore $C_b(\lambda_a) \ne C_c(\lambda_a)$.

\end{proof}

We now prove the main result in this section.

\begin{theorem}
\label{Theorem-The-description-of-the-splitting-field-of-the-Carlitz-equation}

Let $\bK$ be a finite, separable extension of $k$. Let $m$ be an element in $\bK$, and let $a$ be a polynomial of positive degree in $A$. Let $\bL$ be the splitting field of the polynomial $C_a(x) - m \in \bK[x]$ over $\bK$. Let $\lambda_a$ be a generator of the $A$-module $\Lambda_a$, where
\begin{align*}
\Lambda_a = \left\{\lambda \in k^{\alg} \; | \; C_a(\lambda) = m \right\}.
\end{align*}
Assume that the equation $C_a(x) = m$ has a solution $x_{\wp}$ in the completion $\bK_{\wp}$ for all but finitely many primes $\wp$ of $\bK$. Then $\bL = \bK(\lambda_a)$.

\end{theorem}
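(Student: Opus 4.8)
The plan is to prove that the hypothesis forces $\bL$ to coincide with the Carlitz--cyclotomic extension obtained by adjoining the $a$-torsion to $\bK$, which by Lemma~\ref{Lemma-Description-of-elements-in-C-a-lambda}$(ii)$ is precisely $\bK(\lambda_a)$. I would begin by recording the structural facts. Since $C_a$ is an additive polynomial, $C_a(x)-m$ has formal derivative $a\in\bK^\times$ and is therefore separable, so $\bL/\bK$ is Galois; write $G=\Gal(\bL/\bK)$. A difference of two roots of $C_a(x)-m$ is annihilated by $C_a$, so the set $R$ of its roots is a torsor under $\Lambda_a$, whence $\#R=q^{\deg(a)}$ by Lemma~\ref{Lemma-Description-of-elements-in-C-a-lambda}, and $\bL=\bK(\lambda_a)(\lambda_0)$ for any fixed $\lambda_0\in R$. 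In particular $\bK(\lambda_a)\subseteq\bL$ is immediate, and it remains to show that $W:=\Gal\bigl(\bL/\bK(\lambda_a)\bigr)$ is trivial.

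The engine of the argument is the bound $[\bL:\bK]\le\phi(a)\,q^{\deg(a)}$. First, $[\bK(\lambda_a):\bK]\le\phi(a)$, because $\bK(\lambda_a)$ is the compositum of $\bK$ with the cyclotomic function field $k(\Lambda_a)$, and $\Gal(k(\Lambda_a)/k)$ embeds into $(A/aA)^\times$, a group of order $\phi(a)$; this is the class-field-theory input (see \cite[Chapter {\bf12}]{Rosen}). Second, $[\bL:\bK(\lambda_a)]\le q^{\deg(a)}$, because $\bL=\bK(\lambda_a)(\lambda_0)$ and $\lambda_0$ is a root of the degree-$q^{\deg(a)}$ polynomial $C_a(x)-m$. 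I would then translate local solvability into a Frobenius condition: after discarding the finitely many primes $\wp$ of $\bK$ that ramify in $\bL$, that are poles of $m$, or that divide $a$, the reduction of $C_a(x)-m$ modulo $\wp$ is separable, so by Hensel's lemma $C_a(x)=m$ has a solution in $\bK_\wp$ if and only if $C_a(x)-m$ has a root in the residue field of $\wp$, i.e. if and only if $\mathrm{Frob}_\wp$ has a fixed point on $R$ --- a condition invariant under conjugation in $G$.

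To finish, suppose for contradiction that $W\ne 1$ and pick $w\in W$ with $w\ne 1$. Since $w$ fixes $\bK(\lambda_a)\supseteq\Lambda_a$ pointwise, for any $\lambda_0\in R$ and $t\in\Lambda_a$ one has $w(\lambda_0+t)-(\lambda_0+t)=w(\lambda_0)-\lambda_0=:t_w\in\Lambda_a$, so $w$ acts on $R$ as translation by $t_w$; this translation has a fixed point only when $t_w=0$, and $t_w=0$ would make $w$ act trivially on $R$, hence on $\bL=\bK(R)$, forcing $w=1$. So $w$ is a fixed-point-free element of $G$, and by the Tchebotarev density theorem the primes $\wp$ with $\mathrm{Frob}_\wp$ conjugate to $w$ form a set of Dirichlet density at least $1/[\bL:\bK]\ge 1/\phi(a)q^{\deg(a)}$; at each such $\wp$ the equation $C_a(x)=m$ has no solution in $\bK_\wp$. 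This contradicts the hypothesis that $C_a(x)=m$ is locally solvable at all but finitely many primes (and, in the sharper form mentioned in the introduction, at a set of primes of density exceeding $1-1/\phi(a)q^{\deg(a)}$). Hence $W=1$ and $\bL=\bK(\lambda_a)$. I expect the main obstacle to be the degree bound $[\bL:\bK]\le\phi(a)q^{\deg(a)}$ together with the bookkeeping of the finite set of excluded primes in the local-to-Frobenius dictionary; once these are secured, a single fixed-point-free translation already overshoots the density threshold and the proof closes.
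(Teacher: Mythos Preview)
Your argument is correct and follows the same strategy as the paper's proof: identify $\Gal(\bL/\bK(\lambda_a))$ with translations of the root set $R$ by elements of $\Lambda_a$, note that a nontrivial translation is fixed-point-free, and invoke Tchebotarev to realize such an element as a Frobenius at an unramified prime, where local solvability forces a fixed point. The only cosmetic differences are that the paper encodes the Galois action via an explicit embedding $\Psi:\Gal(\bL/\bK)\hookrightarrow\bigl\{\begin{smallmatrix}*&*\\0&1\end{smallmatrix}\bigr\}\subset\GL_2(A/aA)$ and uses the decomposition-field description $\bL\cap\bK_\wp$ directly, rather than your Hensel detour---which is in any case unnecessary, since you only use the implication ``local solution $\Rightarrow$ Frobenius fixed point'', and that follows immediately from $x_\wp\in R\cap\bK_\wp\subset\bL\cap\bK_\wp$.
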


\begin{proof}

Note that $\bK \subset k^{\alg}$ since $\bK$ is a finite extension of $k$. Let $h$ be an element in $k^{\alg}$ such that
\begin{align*}
C_a(h) = m.
\end{align*}
By Lemma \ref{Lemma-Description-of-elements-in-C-a-lambda}, we see that the set of all solutions of the equation $C_a(x) = m$ is given by
\begin{align}
\label{Equation-The-set-S-of-all-solutions-to-the-equation-C-a-(x)=m}
\cS = \{C_b(\lambda_a) + h \; | \; \text{$b \in A$ with $\deg(b) < \deg(a)$}\}.
\end{align}

Since the polynomial $C_a(x) - m$ is separable and $\bL$ is the splitting field of the polynomial $C_a(x) - m \in \bK[x]$, we deduce that $\bL = \bK(\cS)$ and $\bL$ is a Galois extension of $\bK$. Since $\lambda_a = (C_1(\lambda_a) + h) - (C_0(\lambda_a) + h)$, we see that $\lambda_a \in \bL$. We see that $h = C_0(\lambda) + h \in \bL$, and thus
\begin{align*}
\bL = \bK(\lambda_a, h).
\end{align*}

Let $\Gal(\bL/\bK)$ denote the Galois group of the Galois extension $\bL/\bK$. An element $\psi \in \Gal(\bL/\bK)$ is uniquely determined by its action on $\lambda_a$ and $h$. Hence to each element $\psi \in \Gal(\bL/\bK)$, we can associate a unique ordered pair $(b_{\psi}, u_{\psi}) \in (A/aA)^{\times} \times A/aA$ such that $\psi(\lambda_a) = C_{b_{\psi}}(\lambda_a)$ and $\psi(h) = C_{u_{\psi}}(\lambda_a) + h$. Let $\Psi : \Gal(\bL/\bK) \rightarrow \GL_2(A/aA)$ be the map defined by
\begin{align}
\label{Definition-The-representation-Psi-of-the-Galois-group-of-L-over-K}
\Psi(\psi) = \begin{pmatrix} b_{\psi} & u_{\psi} \\ 0 & 1 \end{pmatrix}
\end{align}
for each $\psi \in \Gal(\bL/\bK)$. Since $C$ is the Carlitz module, it is not difficult to see that for any elements $\sigma, \psi \in \Gal(\bL/\bK)$, the action of $\psi \circ \sigma$ on $\lambda_a$ and $h$ are given by
\begin{align*}
(\psi\circ\sigma)(\lambda_a) = C_{b_{\psi}b_{\sigma}}(\lambda_a) \quad \text{and} \quad (\psi\circ\sigma)(h) = C_{b_{\psi}u_{\sigma} + u_{\psi}}(\lambda_a) + h,
\end{align*}
where $(b_{\sigma}, u_{\sigma}), (b_{\psi}, u_{\psi}) \in (A/aA)^{\times} \times A/aA$ are uniquely determined by $\sigma, \psi$, respectively in the same way as above. Hence $\Psi$ is an injective homomorphism.

Let $\Sigma$ be the image of $\Gal(\bL/\bK)$ under $\Psi$. Set
\begin{align}
\label{Equation-The-set-E-in-GL2-A/aA}
\cE = \left\{\begin{pmatrix} \star & \star \\ 0 & 1 \end{pmatrix} \in \GL_2(A/aA) \right\}
\end{align}
and
\begin{align}
\label{Equation-The-set-M-in-GL2-A/aA}
\cM = \left\{\begin{pmatrix} 1 & \star \\ 0 & 1 \end{pmatrix} \in \GL_2(A/aA) \right\}.
\end{align}
We see that $\Sigma$ is a subgroup of $\cE$ and $\cE/\cM \cong (A/aA)^{\times}$. Hence we obtain the embedding
\begin{align}
\label{Equation-The-first-form-of-the-embedding}
\Sigma/\Sigma \cap \cM \hookrightarrow \cE/\cM \cong (A/aA)^{\times}.
\end{align}

We contend that $\Sigma \cap \cM = \left\{\begin{pmatrix} 1 & 0 \\ 0 & 1 \end{pmatrix} \right\}$. Indeed, let $M = \begin{pmatrix} 1 & u \\ 0 & 1 \end{pmatrix}$ be an arbitrary element in $\Sigma \cap \cM$ for some element $u \in A/aA$. Then there exists an element $\psi \in \Gal(\bL/\bK)$ such that $\Psi(\psi) = M$. Let $(b_{\psi}, u_{\psi}) \in (A/aA)^{\times} \times (A/aA)$ be the ordered pair uniquely determined by
\begin{align}
\label{Equation-The-action-of-psi-on-lambda-a}
\psi(\lambda_a) = C_{b_{\psi}}(\lambda_a)
\end{align}
and
\begin{align}
\label{Equation-The-action-of-psi-on-h}
\psi(h) = C_{u_{\psi}}(\lambda_a) + h.
\end{align}
By the definition of $\Psi$, we see that
\begin{align*}
\begin{pmatrix} 1 & u \\ 0 & 1 \end{pmatrix} = M = \Psi(\psi) = \begin{pmatrix} b_{\psi} & u_{\psi} \\ 0 & 1 \end{pmatrix},
\end{align*}
and it thus follows that
\begin{align}
\label{Equation-The-value-of-b-psi}
b_{\psi} = 1
\end{align}
and
\begin{align}
\label{Equation-The-value-of-u-psi}
u_{\psi} = u.
\end{align}

By assumption, we know that there exists a finite set of primes of $\bK$, say, $\cP$ such that $C_a(x) = m$ has a solution $x_{\wp}$ in the completion $\bK_{\wp}$ for every prime $\wp$ of $\bK$ with $\wp \not\in \cP$.

By the Tchebotarev density theorem (see \cite[Theorem {\bf9.13A}]{Rosen}), there exist infinitely many primes $\wp$ in $\bK$ such that $\wp$ is unramified in $\bL$ and $\psi$ is equal to the Frobenius automorphism $(\fB, \bL/\bK)$ for some prime $\fB$ in $\bL$ lying above $\wp$. Since the set $\cP$ is finite, one can choose a prime $\wp$ in $\bK$ with $\wp \not\in \cP$ such that $\wp$ is unramified in $\bL$ and $\psi = (\fB, \bL/\bK)$ for some prime $\fB$ in $\bL$ lying above $\wp$.

Let $\cD(\fB/\wp)$ be the decomposition group of $\fB$ over $\wp$. It is well-known \cite[Proposition {\bf9.8}]{Neukirch} that the fixed field of $\cD(\fB/\wp)$, or equivalently the decomposition field of $\fB$ over $\wp$, is $\bL \cap \bK_{\wp}$. Since $\wp \not\in \cP$, we know that there exists an element $x_{\wp}$ in $\bK_{\wp}$ such that $C_a(x_{\wp}) = m$. Hence $x_{\wp}$ belongs to $\cS$, where $\cS$ is given by $(\ref{Equation-The-set-S-of-all-solutions-to-the-equation-C-a-(x)=m})$. Thus $x_{\wp} = C_b(\lambda_a) + h$ for some $b \in A$ with $\deg(b) < \deg(a)$. In particular this implies that $x_{\wp} \in \bL \cap \bK_{\wp}$.

Since $\psi = (\fB, \bL/\bK) \in \cD(\fB/\wp)$ and $\bL \cap \bK_{\wp}$ is the fixed field of $\cD(\fB/\wp)$, we deduce that
\begin{align*}
\psi(x_{\wp}) = (\fB, \bL/\bK)(x_{\wp}) = x_{\wp}.
\end{align*}
On the other hand, we deduce from $(\ref{Equation-The-value-of-b-psi})$ and $(\ref{Equation-The-value-of-u-psi})$ that
\begin{align*}
C_b(\lambda_a) + h = x_{\wp} = \psi(x_{\wp}) &= \psi(C_b(\lambda_a) + h) \\
&= \psi(C_b(\lambda_a)) + \psi(h) \\
&= C_b(\psi(\lambda_a)) + C_u(\lambda) + h \\
&= C_b(C_1(\lambda_a)) + C_u(\lambda) + h \\
&= C_b(\lambda_a) + C_u(\lambda) + h,
\end{align*}
and thus $C_u(\lambda_a) = 0$.

If $u \ne 0$, then we deduce from Lemma \ref{Lemma-Description-of-elements-in-C-a-lambda} that $C_u(\lambda_a) \ne C_0(\lambda_a)$, and thus $C_u(\lambda_a) \ne 0$, which is a contradiction. Therefore $u = 0$, and hence
\begin{align}
\label{Equation-The-intersection-between-Sigma-and-cM-is-trivial}
\Sigma \cap \cM = \left\{\begin{pmatrix} 1 & 0 \\ 0 & 1 \end{pmatrix} \right\}.
\end{align}

Let $\I$ be the subgroup of $\Gal(\bL/\bK)$ that corresponds to the subfield $\bK(\lambda_a)$ of $\bL$. In other words, $\bK(\lambda_a)$ is the fixed field of $\I$. Take any element $\sigma \in \I$. We know that there exists a unique ordered pair $(b_{\sigma}, u_{\sigma}) \in (A/aA)^{\times} \times A/aA$ such that $\sigma(\lambda_a) = C_{b_{\sigma}}(\lambda_a)$ and $\sigma(h) = C_{u_{\sigma}}(\lambda_a) + h$. Since $\sigma$ belongs to $\I$, we know that $C_{b_{\sigma}}(\lambda_a) = \sigma(\lambda_a) = \lambda_a = C_1(\lambda_a)$, and it thus follows from Lemma \ref{Lemma-Description-of-elements-in-C-a-lambda} that $b_{\sigma} = 1$. Hence we deduce that
\begin{align*}
\Psi(\sigma) = \begin{pmatrix} b_{\sigma} & u_{\sigma} \\ 0 & 1 \end{pmatrix} = \begin{pmatrix} 1 & u_{\sigma} \\ 0 & 1 \end{pmatrix},
\end{align*}
and hence $\Psi(\sigma) \in \Sigma \cap \cM$. By $(\ref{Equation-The-intersection-between-Sigma-and-cM-is-trivial})$, we deduce that $\Psi(\sigma) = \begin{pmatrix} 1 & 0 \\ 0 & 1 \end{pmatrix}$, and thus $u_{\sigma} = 0$. Hence $\sigma$ is the identity map, and therefore $\I =\{\text{id}_{\bL}\}$. Thus by Galois theory, we deduce that $\bL = \bK(\lambda_a)$, which proves our contention.

\end{proof}

\begin{remark}
\label{Remark-The-conditions-can-be-simplified-to-a-certain-positive-density-of-primes}

One of the key ingredients in proving Theorem \ref{Theorem-The-description-of-the-splitting-field-of-the-Carlitz-equation} is the use of Tchebotarev density theorem to prove that $\Sigma \cap \cM = \left\{\begin{pmatrix} 1 & 0 \\ 0 & 1 \end{pmatrix}\right\}$. For an element $\psi \in \Gal(\bL/\bK)$, if we use the precise formula for the Dirichlet density of all unramified primes $\fp$ in $\bK$ such that the set $\left(\fp, \bL/\bK\right)$ of all Frobenius automorphisms equals the conjugacy class of $\psi$ in $\Gal(\bL/\bK)$, then we obtain a stronger result than Theorem \ref{Theorem-The-description-of-the-splitting-field-of-the-Carlitz-equation}. We will shortly prove a refinement of Theorem \ref{Theorem-The-description-of-the-splitting-field-of-the-Carlitz-equation} using the Dirichlet density that leads us to the same conclusion as Theorem \ref{Theorem-The-description-of-the-splitting-field-of-the-Carlitz-equation} with less \textit{restrictive} assumptions.

\end{remark}

Let us first recall some basic notions and the precise statement of Tchebotarev density theorem as presented in \cite[Chapter {\bf9}]{Rosen}. Let $\bL/\bK$ be a Galois extension of global function fields. Let $\cP_{\bK}$ be the set of all primes in $\bK$, and let $\cA$ be a set of primes of $\bK$. If the limit given by
\begin{align}
\label{Equation-The-limit-in-the-definition-of-the-Dirichlet-density}
\lim_{s \rightarrow 1^{+}}\dfrac{\sum_{\wp \in \cA}{N\wp^{-s}}}{\sum_{\wp \in \cP_{\bK}}{N\wp^{-s}}}
\end{align}
exists, we define the \textit{Dirichlet density of $\cA$} to be the value of the limit given by $(\ref{Equation-The-limit-in-the-definition-of-the-Dirichlet-density})$, and denote it by $\delta(\cA)$. If the limit given by $(\ref{Equation-The-limit-in-the-definition-of-the-Dirichlet-density})$ does not exist, we say that $\cA$ does not have the Dirichlet density.

We now recall the statement of Tchebotarev density theorem.
\begin{theorem}
\label{Theorem-Tchebotarev-density-theorem}
$(\text{see \cite[Theorem {\bf9.13A}]{Rosen}})$

Let $\bL/\bK$ be a Galois extension of global function fields, and let $\Gal(\bL/\bK)$ denote the Galois group. Let $\cP^{\text{un}}_{\bK}$ be the set of primes of $\bK$ that are unramified in $\bL$. Let $\C \subset \Gal(\bL/\bK)$ be a conjugacy class in $\Gal(\bL/\bK)$. Let $\cH$ be the subset of $\cP^{\text{un}}_{\bK}$ defined by
\begin{align*}
\cH = \left\{\wp \in \cP^{\text{un}}_{\bK} \; | \; (\wp, \bL/\bK) = \C \right\}.
\end{align*}
Then
\begin{align*}
\delta(\cH) = \dfrac{\Card(\C)}{\Card(\Gal(\bL/\bK))},
\end{align*}
where $\Card(\cdot)$ denotes the number of elements in a set $(\cdot)$.
\end{theorem}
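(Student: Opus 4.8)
The statement is the Tchebotarev density theorem in the function field setting, so the plan is to follow the classical analytic route, exploiting the fact that zeta and Artin $L$-functions in characteristic $p$ are rational functions of $u = q^{-s}$. The argument has two largely independent ingredients: a purely group-theoretic reduction of a conjugacy class to a single element generating a cyclic subgroup (Deuring's device), and an analytic core treating cyclic (indeed arbitrary abelian) subextensions.

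\emph{Step 1 (abelian case).} Let $\bM \subseteq \bL$ with $\bL/\bM$ abelian, write $H = \Gal(\bL/\bM)$, and fix $\sigma \in H$. For each character $\chi$ of $H$ form the $L$-function
\begin{align*}
L(s,\chi) = \prod_{\fq}\left(1 - \chi((\fq, \bL/\bM))\,N\fq^{-s}\right)^{-1},
\end{align*}
the product over primes $\fq$ of $\bM$ unramified in $\bL$ (with suitable Euler factors at the finitely many ramified primes). Two facts are needed, both elementary here: $(a)$ for $\chi$ nontrivial, $L(s,\chi)$ is a polynomial in $u = q^{-s}$, hence holomorphic at $s = 1$; and $(b)$ for $\chi$ nontrivial, $L(1,\chi) \neq 0$. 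Fact $(b)$ follows from the factorization $\zeta_{\bL}(s) = \prod_{\chi} L(s,\chi)$: the quotient $\zeta_{\bL}(s)/\zeta_{\bM}(s) = \prod_{\chi \neq 1} L(s,\chi)$ is holomorphic and nonzero at $s = 1$ because $\zeta_{\bL}$ and $\zeta_{\bM}$ each have a simple pole there and no other nearby singularity (rationality of $\zeta$ with a simple pole at $s=1$ being the classical theorem of F.\ K.\ Schmidt), and since each factor on the right is already holomorphic at $s=1$, none can vanish there. By orthogonality of the characters of $H$,
\begin{align*}
\sum_{\substack{\fq \text{ unramified in } \bL \\ (\fq, \bL/\bM) = \sigma}} N\fq^{-s} = \frac{1}{\Card(H)}\sum_{\chi} \overline{\chi(\sigma)}\sum_{\fq} \chi((\fq, \bL/\bM))\,N\fq^{-s},
\end{align*}
and letting $s \to 1^{+}$, each inner sum equals $\log L(s,\chi) + O(1)$; by $(a)$ and $(b)$ only $\chi = 1$ contributes a singularity, of the shape $\log\tfrac{1}{s-1}$, while $\sum_{\fq} N\fq^{-s} \sim \log\tfrac{1}{s-1}$ as well. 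Hence the set of primes $\fq$ of $\bM$ with $(\fq, \bL/\bM) = \sigma$ has Dirichlet density $1/\Card(H)$.

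\emph{Step 2 (reduction).} Given a conjugacy class $\C$ of $G = \Gal(\bL/\bK)$, fix $\sigma \in \C$, put $H = \langle \sigma \rangle$, $f = \Card(H)$, and $\bM = \bL^{H}$. A double-coset count shows: if $\wp$ is a prime of $\bK$ unramified in $\bL$ with $(\wp, \bL/\bK) = \C$, then the number of primes $\fq$ of $\bM$ above $\wp$ that have residue degree $1$ over $\bK$ and satisfy $(\fq, \bL/\bM) = \sigma$ equals $\Card(C_G(\sigma))/f$, where $C_G(\sigma) = \{g \in G : g\sigma = \sigma g\}$; moreover such $\fq$ satisfy $N\fq = N\wp$. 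On the other hand, the primes $\fq$ of $\bM$ of residue degree $\geq 2$ over $\bK$ contribute a convergent (hence density-zero) sum at $s = 1$, so by Step~1 the set of primes $\fq$ of $\bM$ of residue degree $1$ over $\bK$ with $(\fq, \bL/\bM) = \sigma$ still has density $1/f$. Dividing $1/f$ by the multiplicity $\Card(C_G(\sigma))/f$ yields
\begin{align*}
\delta(\cH) = \frac{1}{\Card(C_G(\sigma))} = \frac{\Card(\C)}{\Card(\Gal(\bL/\bK))},
\end{align*}
the last equality being the orbit-stabilizer identity $\Card(\C)\,\Card(C_G(\sigma)) = \Card(G)$.

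The main obstacle is the bookkeeping in Step~2: one must verify the multiplicity $\Card(C_G(\sigma))/f$ correctly, tracking which primes of $\bM$ above $\wp$ have residue degree one over $\bK$ (the double cosets $HgH$ with $g$ in the normalizer of $H$) and, among those, which carry Frobenius exactly $\sigma$ in $\bL/\bM$ (those with $g \in C_G(\sigma)$), and confirm that higher-residue-degree primes of $\bM$ are negligible for the density. On the analytic side the only genuine point is the non-vanishing $L(1,\chi) \neq 0$; in characteristic $p$ this is immediate once one has the rationality of $\zeta$ and the simplicity of its pole at $s = 1$. These are precisely the steps carried out in \cite[Chapter~{\bf9}]{Rosen}.
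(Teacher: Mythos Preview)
The paper does not prove this theorem: it is stated with a citation to \cite[Theorem {\bf9.13A}]{Rosen} and used as a black box in the proofs of Theorems \ref{Theorem-The-description-of-the-splitting-field-of-the-Carlitz-equation} and \ref{Theorem-A-refinement-of-the-theorem-about-the-splitting-field-of-C-a-x-equals-m}. So there is no ``paper's own proof'' to compare against.

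That said, your outline is the standard proof and is essentially what Rosen does in Chapter~9: first the abelian case via orthogonality of characters and the nonvanishing of $L(1,\chi)$ for $\chi \ne 1$ (deduced from the factorization $\zeta_{\bL} = \prod_\chi L(\cdot,\chi)$ together with the simplicity of the pole of each zeta function at $s=1$), then Deuring's reduction by passing to $\bM = \bL^{\langle\sigma\rangle}$ and counting, for each unramified $\wp$ with $(\wp,\bL/\bK)=\C$, the primes $\fq$ of $\bM$ above $\wp$ having residue degree $1$ over $\bK$ and $(\fq,\bL/\bM)=\sigma$. Your multiplicity $\Card(C_G(\sigma))/f$ is correct: choosing a prime $\fB$ of $\bL$ above $\wp$ with Frobenius exactly $\sigma$, the primes of $\bM$ above $\wp$ correspond to double cosets $H\backslash G/H$ with $H=\langle\sigma\rangle$; those of residue degree $1$ over $\bK$ are the cosets $gH$ with $g\in N_G(H)$, and among these the Frobenius over $\bM$ equals $g\sigma g^{-1}$, which is $\sigma$ precisely when $g\in C_G(\sigma)$. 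The discarding of higher-residue-degree primes of $\bM$ is legitimate since $\sum_{f(\fq/\wp)\ge 2} N\fq^{-s}$ is dominated by $[\bM:\bK]\sum_\wp N\wp^{-2s}$, which converges at $s=1$. Nothing is missing; your sketch would compile into a complete proof along the lines of \cite[Chapter~{\bf9}]{Rosen}.
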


We now prove a refinement of Theorem \ref{Theorem-The-description-of-the-splitting-field-of-the-Carlitz-equation} in which we only assume that the equation $C_a(x) = m$ is locally solvable at all primes $\wp$ in a set of the Dirichlet density greater than $1 - 1/\Card(\Gal(\bL/\bK))$.

\begin{theorem}
\label{Theorem-A-refinement-of-the-theorem-about-the-splitting-field-of-C-a-x-equals-m}

We maintain the same notation as in Theorem \ref{Theorem-The-description-of-the-splitting-field-of-the-Carlitz-equation}. Let $\cH$ be a set of primes in $\bK$ such that the Dirichlet density of $\cH$ satisfies
\begin{align}
\label{Equation-The-lower-bound-of-the-Dirichlet-density-of-H}
\delta(\cH) > 1 - \dfrac{1}{\Card(\Gal(\bL/\bK))}.
\end{align}
Assume that the equation $C_a(x) = m$ has a solution $x_{\wp}$ in the completion $\bK_{\wp}$ for each prime $\wp \in \cH$. Then $\bL = \bK(\lambda_a)$.

\end{theorem}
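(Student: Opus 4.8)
The plan is to reprove Theorem~\ref{Theorem-A-refinement-of-the-theorem-about-the-splitting-field-of-C-a-x-equals-m} by running the argument of Theorem~\ref{Theorem-The-description-of-the-splitting-field-of-the-Carlitz-equation} essentially verbatim, replacing its one use of the Tchebotarev density theorem -- the step that produces an unramified prime $\wp\notin\cP$ with a prescribed Frobenius -- by a density-counting argument that instead produces such a prime \emph{inside} the set $\cH$. Concretely, one keeps the description $\bL=\bK(\lambda_a,h)$, the injective homomorphism $\Psi:\Gal(\bL/\bK)\hookrightarrow\GL_2(A/aA)$, its image $\Sigma$, and the subgroups $\cE$ and $\cM$; as in Theorem~\ref{Theorem-The-description-of-the-splitting-field-of-the-Carlitz-equation}, the whole statement reduces to showing $\Sigma\cap\cM=\left\{\begin{pmatrix}1&0\\0&1\end{pmatrix}\right\}$, because the passage from this to $\I=\{\id_{\bL}\}$ and hence to $\bL=\bK(\lambda_a)$ uses only Lemma~\ref{Lemma-Description-of-elements-in-C-a-lambda} and Galois theory, not the local hypothesis.

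So fix an arbitrary $M=\begin{pmatrix}1&u\\0&1\end{pmatrix}\in\Sigma\cap\cM$, let $\psi\in\Gal(\bL/\bK)$ be the unique element with $\Psi(\psi)=M$ (so $b_{\psi}=1$ and $u_{\psi}=u$ in the notation of Theorem~\ref{Theorem-The-description-of-the-splitting-field-of-the-Carlitz-equation}), let $\C$ be the conjugacy class of $\psi$, and put $\cH_{\C}=\{\wp\in\cP^{\text{un}}_{\bK}\; : \;(\wp,\bL/\bK)=\C\}$. By Theorem~\ref{Theorem-Tchebotarev-density-theorem}, $\delta(\cH_{\C})=\Card(\C)/\Card(\Gal(\bL/\bK))\ge 1/\Card(\Gal(\bL/\bK))$. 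I claim $\cH\cap\cH_{\C}\ne\emptyset$. Since $\bL/\bK$ has only finitely many ramified primes and a finite set of primes has Dirichlet density $0$, we may replace $\cH$ by $\cH\cap\cP^{\text{un}}_{\bK}$ without changing $\delta(\cH)$, so assume $\cH\subseteq\cP^{\text{un}}_{\bK}$. If $\cH\cap\cH_{\C}=\emptyset$, then $\cH_{\C}\subseteq\cP_{\bK}\setminus\cH$; splitting $\sum_{\wp\in\cP_{\bK}}N\wp^{-s}$ as $\sum_{\wp\in\cH}N\wp^{-s}+\sum_{\wp\in\cP_{\bK}\setminus\cH}N\wp^{-s}$ and using that $\delta(\cH)$ exists shows $\delta(\cP_{\bK}\setminus\cH)=1-\delta(\cH)$, and monotonicity of Dirichlet density (a ratio of sums of nonnegative terms can only grow when the index set does) gives $1/\Card(\Gal(\bL/\bK))\le\delta(\cH_{\C})\le 1-\delta(\cH)$, contradicting $(\ref{Equation-The-lower-bound-of-the-Dirichlet-density-of-H})$. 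Hence there is $\wp\in\cH$, unramified in $\bL$, with $(\wp,\bL/\bK)=\C$, and therefore a prime $\fB$ of $\bL$ above $\wp$ with $(\fB,\bL/\bK)=\psi$.

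From here the rest is exactly the argument of Theorem~\ref{Theorem-The-description-of-the-splitting-field-of-the-Carlitz-equation}: since $\wp\in\cH$ the equation $C_a(x)=m$ has a solution $x_{\wp}\in\bK_{\wp}$, so $x_{\wp}\in\cS$, hence $x_{\wp}\in\bL\cap\bK_{\wp}$, which is the fixed field of the decomposition group $\cD(\fB/\wp)$; as $\psi=(\fB,\bL/\bK)\in\cD(\fB/\wp)$ we get $\psi(x_{\wp})=x_{\wp}$, and writing $x_{\wp}=C_b(\lambda_a)+h$ with $b_{\psi}=1,\ u_{\psi}=u$ the same computation yields $C_u(\lambda_a)=0$, so $u=0$ by Lemma~\ref{Lemma-Description-of-elements-in-C-a-lambda}. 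Thus $\Sigma\cap\cM$ is trivial, and then $\I=\{\id_{\bL}\}$ and $\bL=\bK(\lambda_a)$ follow as in Theorem~\ref{Theorem-The-description-of-the-splitting-field-of-the-Carlitz-equation}. The only genuinely new ingredient is the density comparison in the middle paragraph, so the point to be careful about is simply that Dirichlet density is monotone and that the complement of a set of density $\delta(\cH)$ has density $1-\delta(\cH)$; both are immediate from the defining limit $(\ref{Equation-The-limit-in-the-definition-of-the-Dirichlet-density})$, and nothing beyond the finiteness of the ramified locus is needed.
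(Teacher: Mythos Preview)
Your proposal is correct and follows essentially the same route as the paper: both reduce to showing $\Sigma\cap\cM$ is trivial, pick $\psi$ with $\Psi(\psi)=M$, and use Tchebotarev to force the conjugacy-class set $\cH_{\C}$ (the paper calls it $\cB_{\psi}$) to meet $\cH$, after which the computation from Theorem~\ref{Theorem-The-description-of-the-splitting-field-of-the-Carlitz-equation} finishes the job. The only cosmetic difference is that the paper derives the contradiction via additivity on the disjoint union $\cH\cup\cB_{\psi}$, whereas you pass to the complement and invoke monotonicity; your preliminary replacement of $\cH$ by $\cH\cap\cP^{\text{un}}_{\bK}$ is harmless but unnecessary, since $\cH_{\C}\subseteq\cP^{\text{un}}_{\bK}$ already forces $\cH_{\C}\subseteq\cP_{\bK}\setminus\cH$ once the two sets are disjoint.
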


\begin{remark}
\label{Remark-The-absolute-lower-bound-for-the Dirichlet-density-of-H}

We can replace the lower bound of $\delta(\cH)$ in $(\ref{Equation-The-lower-bound-of-the-Dirichlet-density-of-H})$ by an absolute lower bound that only depends on $a$. Indeed, following the first part of the proof of Theorem \ref{Theorem-The-description-of-the-splitting-field-of-the-Carlitz-equation}, we know that $\bL = \bK(\lambda_a, h)$. Note that this follows directly from the fact that $\bL$ is the splitting field of the polynomial $C_a(x) - m \in \bK[x]$ over $\bK$. By \cite[Theorem {\bf12.3.16}]{Villa-Salvador}, we know that $\lambda_a$ is a root of the $a$-th cyclotomic polynomial $\Phi_a(x) \in k[x] \subset \bK[x]$. We also know from \cite[Theorem {\bf12.3.16}]{Villa-Salvador} that the degree of $\Phi_a(x)$ is $\phi(a)$, where $\phi(a)$ is the number of nonzero polynomials in $A$ of degree less than $\deg(a)$ and relatively prime to $a$. (For a precise formula for $\phi(a)$, see \cite[Proposition {\bf1.7}]{Rosen}.) Thus
\begin{align*}
[\bK(\lambda_a) : \bK] \le \phi(a).
\end{align*}
Since the polynomial $C_a(x) - m \in \bK[x]$ is of degree $q^{\deg(a)}$ and $C_a(h) = m$, we deduce that
\begin{align*}
[\bL : \bK(\lambda_a)] = [\bK(\lambda_a)(h) : \bK(\lambda_a)] \le \deg(C_a(x) - m) = q^{\deg(a)},
\end{align*}
and therefore
\begin{align*}
\Card(\Gal(\bL/\bK)) = [\bL : \bK] = [\bL : \bK(\lambda_a)][\bK(\lambda_a) : \bK] \le \phi(a)q^{\deg(a)}.
\end{align*}
Thus
\begin{align*}
1 - \dfrac{1}{\Card(\Gal(\bL/\bK))} \le 1 - \dfrac{1}{\phi(a)q^{\deg(a)}}.
\end{align*}
We see from the above inequality that if $\delta(\cH) > 1 - \dfrac{1}{\phi(a)q^{\deg(a)}}$, then the condition $(\ref{Equation-The-lower-bound-of-the-Dirichlet-density-of-H})$ follows trivially.

\end{remark}

\begin{proof}[Proof of Theorem \ref{Theorem-A-refinement-of-the-theorem-about-the-splitting-field-of-C-a-x-equals-m}]

We maintain the same notation as in the proof of Theorem \ref{Theorem-The-description-of-the-splitting-field-of-the-Carlitz-equation}. As noted in Remark \ref{Remark-The-conditions-can-be-simplified-to-a-certain-positive-density-of-primes}, a crucial ingredient in the proof of Theorem \ref{Theorem-The-description-of-the-splitting-field-of-the-Carlitz-equation} is the use of Tchebotarev density theorem to prove that
\begin{align}
\label{Equation-The-intersection-of-Sigma-and-M-is-trivial-in-the-refinement-theorem}
\Sigma \cap \cM = \left\{\begin{pmatrix} 1 & 0 \\ 0 & 1 \end{pmatrix}\right\}.
\end{align}
Looking closely at the proof of Theorem \ref{Theorem-The-description-of-the-splitting-field-of-the-Carlitz-equation}, we see that once we establish $(\ref{Equation-The-intersection-of-Sigma-and-M-is-trivial-in-the-refinement-theorem})$, then it follows immediately that $\bL = \bK(\lambda_a)$. Hence it suffices to verify $(\ref{Equation-The-intersection-of-Sigma-and-M-is-trivial-in-the-refinement-theorem})$.

Take any element $M = \begin{pmatrix} 1 & u \\ 0 & 1 \end{pmatrix} \in \Sigma \cap \cM$ for some $u \in A/aA$. There exists an element $\psi \in \Gal(\bL/\bK)$ such that $\Psi(\psi) = M = \begin{pmatrix} 1 & u \\ 0 & 1 \end{pmatrix}$, where $\Psi$ is the injective homomorphism defined by $(\ref{Definition-The-representation-Psi-of-the-Galois-group-of-L-over-K})$. Since $\Psi$ is injective, we see from the definition of $\Psi$ that $\psi(\lambda) = C_1(\lambda_a) = \lambda_a$ and $\psi(h) = C_u(\lambda_a) + h$.

Let $\C_{\psi}$ be the conjugacy class of $\psi$ in $\Gal(\bL/\bK)$. Since $\psi \in \C_{\psi}$, we see that $\Card(\C_{\psi}) \ge 1$. Let $\cB_{\psi}$ be the set defined by
\begin{align*}
\cB_{\psi} = \left\{\wp \in \cP^{\text{un}}_{\bK} \; | \; (\wp, \bL/\bK) = \C_{\psi}  \right\},
\end{align*}
where $\cP^{\text{un}}_{\bK}$ is the set of all primes of $\bK$ that are unramified in $\bL$. We prove that $\cH \cap \cB_{\psi} \ne \emptyset$. Assume the contrary, that is, $\cH \cap \cB_{\psi} = \emptyset$. Then
\begin{align}
\label{Equation-The-indentity-between-Dirichlet-densities}
\delta(\cH \cup \cB_{\psi}) = \delta(\cH) + \delta(\cB_{\psi}).
\end{align}
On the one hand, we see that $\delta(\cH \cup \cB_{\psi}) \le \delta(\cP_{\bK}) = 1$, where $\cP_{\bK}$ denotes the set of all primes in $\bK$. On the other hand, since $\Card(\C_{\psi}) \ge 1$, we deduce from Theorem \ref{Theorem-Tchebotarev-density-theorem} that
\begin{align*}
\delta(\cH) + \delta(\cB_{\psi}) &= \delta(\cH) + \dfrac{\Card(\C_{\psi})}{\Card(\Gal(\bL/\bK))} \\
&> \left(1 - \dfrac{1}{\Card(\Gal(\bL/\bK))}\right) + \dfrac{1}{\Card(\Gal(\bL/\bK))} \\
&= 1 \ge \delta(\cH \cup \cB_{\psi}),
\end{align*}
which is a contradiction to $(\ref{Equation-The-indentity-between-Dirichlet-densities})$. Therefore $\cH \cap \cB_{\psi} \ne \emptyset$, and it thus follows that there exists a prime $\fq$ in $\bK$ such that $\fq \in \cH$, $\fq$ is unramified in $\bL$ and the Frobenius automorphism $(\fB/\fq, \bL/\bK)$ equals $\psi$ for some prime $\fB$ in $\bL$ lying above $\fq$.

By assumption, we know that there exists an element $x_{\fq}$ in the completion $\bK_{\fq}$ of $\bK$ such that $C_a(x_{\fq}) = m$. Repeating the same arguments as in the proof of Theorem \ref{Theorem-The-description-of-the-splitting-field-of-the-Carlitz-equation}, we see that
\begin{align*}
x_{\fq} = (\fB/\fq, \bL/\bK)(x_{\fq}) = \psi(x_{\fq}).
\end{align*}
From the above equation, one can follow the arguments in the proof of Theorem \ref{Theorem-The-description-of-the-splitting-field-of-the-Carlitz-equation} to prove that $C_u(\lambda_a) = 0$, and thus $u = 0$. Therefore $(\ref{Equation-The-intersection-of-Sigma-and-M-is-trivial-in-the-refinement-theorem})$ holds, and as remarked in the first paragraph of this proof, we deduce immediately that $\bL = \bK(\lambda_a)$.

\end{proof}

The following result follows immediately from Remark \ref{Remark-The-absolute-lower-bound-for-the Dirichlet-density-of-H}.
\begin{corollary}
\label{Corollary-The-description-of-the-splitting-field-in-terms-of-the-Dirichlet-density}

We maintain the same notation as in Theorem \ref{Theorem-The-description-of-the-splitting-field-of-the-Carlitz-equation}. Let $\cH$ be a set of primes in $\bK$ such that the Dirichlet density of $\cH$ satisfies
\begin{align}
\label{Equation-The-lower-bound-of-the-Dirichlet-density-of-H-in-a-special-case-of-the-Grunwald-Wang-theorem}
\delta(\cH) > 1 - \dfrac{1}{\phi(a)q^{\deg(a)}}.
\end{align}
Assume that the equation $C_a(x) = m$ has a solution $x_{\wp}$ in the completion $\bK_{\wp}$ for each prime $\wp \in \cH$. Then $\bL = \bK(\lambda_a)$.

\end{corollary}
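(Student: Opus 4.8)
The plan is to derive this statement directly from Theorem~\ref{Theorem-A-refinement-of-the-theorem-about-the-splitting-field-of-C-a-x-equals-m} together with the degree estimate worked out in Remark~\ref{Remark-The-absolute-lower-bound-for-the Dirichlet-density-of-H}. Every hypothesis of Theorem~\ref{Theorem-A-refinement-of-the-theorem-about-the-splitting-field-of-C-a-x-equals-m}, apart from the explicit lower bound on $\delta(\cH)$, is already among the hypotheses of the corollary; so the only point to verify is that the bound \eqref{Equation-The-lower-bound-of-the-Dirichlet-density-of-H-in-a-special-case-of-the-Grunwald-Wang-theorem}, namely $\delta(\cH) > 1 - 1/(\phi(a)q^{\deg(a)})$, implies the bound \eqref{Equation-The-lower-bound-of-the-Dirichlet-density-of-H}, namely $\delta(\cH) > 1 - 1/\Card(\Gal(\bL/\bK))$.

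The key step is therefore to compare $\Card(\Gal(\bL/\bK))$ with $\phi(a)q^{\deg(a)}$, which is precisely the computation in Remark~\ref{Remark-The-absolute-lower-bound-for-the Dirichlet-density-of-H}: since $\bL$ is the splitting field of $C_a(x) - m$ over $\bK$ one has $\bL = \bK(\lambda_a, h)$ with $C_a(h) = m$; the element $\lambda_a$ is a root of the $a$-th cyclotomic polynomial $\Phi_a(x) \in k[x]$, of degree $\phi(a)$, so $[\bK(\lambda_a):\bK] \le \phi(a)$; and $[\bL:\bK(\lambda_a)] \le \deg(C_a(x) - m) = q^{\deg(a)}$, whence $\Card(\Gal(\bL/\bK)) = [\bL:\bK] = [\bL:\bK(\lambda_a)]\,[\bK(\lambda_a):\bK] \le \phi(a)q^{\deg(a)}$. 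Since $t \mapsto 1 - 1/t$ is increasing on $(0,\infty)$, this gives $1 - 1/\Card(\Gal(\bL/\bK)) \le 1 - 1/(\phi(a)q^{\deg(a)})$, so the assumed bound $\delta(\cH) > 1 - 1/(\phi(a)q^{\deg(a)})$ forces $\delta(\cH) > 1 - 1/\Card(\Gal(\bL/\bK))$, which is exactly \eqref{Equation-The-lower-bound-of-the-Dirichlet-density-of-H}.

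Finally, with this density inequality established and the local-solvability hypothesis on $\cH$ carried over unchanged, Theorem~\ref{Theorem-A-refinement-of-the-theorem-about-the-splitting-field-of-C-a-x-equals-m} applies and yields $\bL = \bK(\lambda_a)$, which is the desired conclusion. I do not expect any genuine obstacle here: the corollary is simply a convenient reformulation of Theorem~\ref{Theorem-A-refinement-of-the-theorem-about-the-splitting-field-of-C-a-x-equals-m} in which an \emph{a priori}, $a$-dependent lower bound replaces the (generally unknown) quantity $\Card(\Gal(\bL/\bK))$, and the one substantive ingredient---the bound $[\bK(\lambda_a):\bK] \le \phi(a)$, resting on the theory of cyclotomic function fields (\cite[Theorem~{\bf12.3.16}]{Villa-Salvador})---has already been invoked in the remark.
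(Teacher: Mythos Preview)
Your proposal is correct and follows exactly the paper's approach: the corollary is stated as an immediate consequence of Remark~\ref{Remark-The-absolute-lower-bound-for-the Dirichlet-density-of-H}, which supplies the inequality $\Card(\Gal(\bL/\bK)) \le \phi(a)q^{\deg(a)}$ and hence reduces the hypothesis~\eqref{Equation-The-lower-bound-of-the-Dirichlet-density-of-H-in-a-special-case-of-the-Grunwald-Wang-theorem} to the hypothesis~\eqref{Equation-The-lower-bound-of-the-Dirichlet-density-of-H} of Theorem~\ref{Theorem-A-refinement-of-the-theorem-about-the-splitting-field-of-C-a-x-equals-m}. There is nothing to add.
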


By the above corollary, we obtain a Carlitz module analogue of the Grunwald--Wang theorem in the case that $\bK$ contains a generator of the $A$-module $\Lambda_a$.
\begin{corollary}
\label{Corollary-A-special-case-of-a-Carlitz-module-analogue-of-the-Grunwald-Wang-theorem}

We maintain the same notation as in Theorem \ref{Theorem-The-description-of-the-splitting-field-of-the-Carlitz-equation}. Assume that $\bK$ contains a generator $\lambda_{\star}$ of the module $\Lambda_a$. Let $\cH$ be a set of primes in $\bK$ such that the Dirichlet density of $\cH$ satisfies
\begin{align*}
\delta(\cH) > 1 - \dfrac{1}{\phi(a)q^{\deg(a)}}.
\end{align*}
Then the equation $C_a(x) = m$ has a solution $x_{\bK}$ in $\bK$ if and only if it has a solution $x_{\wp}$ in the completion $\bK_{\wp}$ of $\bK$ for every prime $\wp \in \cH$.

\end{corollary}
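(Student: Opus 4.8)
The plan is to derive this corollary almost immediately from Corollary \ref{Corollary-The-description-of-the-splitting-field-in-terms-of-the-Dirichlet-density}; the only additional observation needed is that the hypothesis $\lambda_{\star}\in\bK$ forces $\bK(\lambda_a)=\bK$. Concretely, I would split the equivalence into its two implications and treat them separately.

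First, the ``only if'' direction is trivial and uses none of the hypotheses on $\cH$ or on $\lambda_{\star}$: if $C_a(x)=m$ has a solution $x_{\bK}\in\bK$, then for every prime $\wp$ of $\bK$ the canonical embedding $\bK\hookrightarrow\bK_{\wp}$ sends $x_{\bK}$ to an element of $\bK_{\wp}$ satisfying $C_a(x)=m$, since $C_a$ has coefficients in $A\subset\bK$. In particular $C_a(x)=m$ is solvable in $\bK_{\wp}$ for every $\wp\in\cH$.

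For the ``if'' direction, suppose $C_a(x)=m$ has a solution $x_{\wp}\in\bK_{\wp}$ for each $\wp\in\cH$. I would first record that $\bK(\lambda_a)=\bK$: since $\lambda_{\star}$ and $\lambda_a$ both generate the cyclic $A$-module $\Lambda_a$, we may write $\lambda_a=C_c(\lambda_{\star})$ for some $c\in A$ (equivalently, by Lemma \ref{Lemma-Description-of-elements-in-C-a-lambda}(ii) applied with $\lambda_{\star}$ in place of $\lambda_a$, the whole module $\Lambda_a$ lies in $\bK(\lambda_{\star})$), and since $\lambda_{\star}\in\bK$ and $C_c$ has coefficients in $A\subset\bK$, it follows that $\lambda_a\in\bK$, hence $\bK(\lambda_a)=\bK$. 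The standing density assumption $\delta(\cH)>1-1/\bigl(\phi(a)q^{\deg(a)}\bigr)$ together with the local solvability at every $\wp\in\cH$ places us exactly in the hypotheses of Corollary \ref{Corollary-The-description-of-the-splitting-field-in-terms-of-the-Dirichlet-density}, which gives $\bL=\bK(\lambda_a)=\bK$. But $\bL$ is by definition the splitting field of $C_a(x)-m\in\bK[x]$ over $\bK$, so $\bL=\bK$ means $C_a(x)-m$ splits completely in $\bK[x]$; as this polynomial is nonconstant (its degree is $q^{\deg(a)}\ge q\ge 2$), it has a root $x_{\bK}\in\bK$, i.e. $C_a(x_{\bK})=m$, which is the desired global solution.

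I do not expect a genuine obstacle here: all of the real work has already been carried out in Theorem \ref{Theorem-A-refinement-of-the-theorem-about-the-splitting-field-of-C-a-x-equals-m} and Corollary \ref{Corollary-The-description-of-the-splitting-field-in-terms-of-the-Dirichlet-density} through the Tchebotarev argument. The one point deserving a moment of care is the identification $\bK(\lambda_a)=\bK$, i.e.\ the elementary fact that a single generator of the cyclic module $\Lambda_a$ generates the whole module over $A$ (hence over $\bK$), so that ``$\bK$ contains a generator of $\Lambda_a$'' is the same as ``$\bK\supseteq\Lambda_a$'' and in particular $\lambda_a\in\bK$; after that, the corollary is a direct translation of ``$\bL=\bK$'' into ``$C_a(x)-m$ has a root in $\bK$''.
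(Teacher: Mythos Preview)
Your proposal is correct and follows essentially the same route as the paper: both directions are handled identically, invoking Corollary \ref{Corollary-The-description-of-the-splitting-field-in-terms-of-the-Dirichlet-density} to obtain $\bL=\bK(\lambda_a)$ and then using that any two generators of the cyclic $A$-module $\Lambda_a$ generate the same field over $\bK$ to conclude $\bL=\bK$. Your write-up is slightly more explicit in justifying $\lambda_a\in\bK$ via $\lambda_a=C_c(\lambda_{\star})$, whereas the paper simply asserts $\bK(\lambda_a)=\bK(\lambda_{\star})$; otherwise the arguments coincide.
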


\begin{proof}

If $C_a(x) = m$ has a solution $x_{\bK}$ in $\bK$, then it is obvious that it is locally solvable at every prime of $\bK$.

If $C_a(x) = m$ is locally solvable at every prime in $\cH$, we deduce from Corollary \ref{Corollary-The-description-of-the-splitting-field-in-terms-of-the-Dirichlet-density} that $\bL = \bK(\lambda_a)$. Since $\lambda_{\star}, \lambda_a$ are generators of $\Lambda_a$, it follows that $\bL = \bK(\lambda_a) = \bK(\lambda_{\star}) = \bK$. Since $\bL$ is the splitting field of the polynomial $C_a(x) - m \in \bK[x]$ over $\bK$, we deduce that all solutions to the equation $C_a(x) = m$ belong to $\bK$.

\end{proof}

\begin{remark}

It is known (see \cite[Chapter {\bf12}]{Rosen}) that a generator of the $A$-module $\Lambda_a$ is a function field analogue of a primitive $a$-th root of unity in the number field setting. It is well-known (see \cite{Carlitz}, \cite{Goss}, \cite{Hayes}, \cite{Rosen}, and \cite{Thakur-book}) that the map $x \mapsto C_a(x)$ with $a \in A$ is a Carlitz module analogue of the map $x \mapsto x^a$ with $a \in \bZ$ in the number field setting. Thus Corollary \ref{Corollary-A-special-case-of-a-Carlitz-module-analogue-of-the-Grunwald-Wang-theorem} is a Carlitz module analogue of the classical Grunwald--Wang theorem in the case that $\bK$ contains a generator of the $A$-module $\Lambda_a$. For an account of a similar result in the number field setting, see \cite[Theorem {\bf1.1}, Chapter {\bf VIII}]{Milne-CFT}.

\end{remark}

\section{An analogue of the Grunwald--Wang theorem for certain Galois extensions $\bK/k$}
\label{Section-An-analogue-of-the-Grunwald-Wang-theorem-for-the-intersection-of-K-and-k-lambda-a-being-equal-to-k}

In this section, we prove a Carlitz module analogue of the classical Grunwald--Wang theorem for certain Galois extensions $\bK/k$. The following theorem is the main result in this section.

\begin{theorem}
\label{Theorem-A-Carlitz-module-analogue-of-the-Grunwald-Wang-theorem}

We maintain the same notation as in Theorem \ref{Theorem-The-description-of-the-splitting-field-of-the-Carlitz-equation}. Assume that $\bK$ is a Galois extension of $k$ such that every prime $P$ dividing $a$ is unramified in $\bL$. Let $\cH$ be a set of primes of $\bK$ satisfying the following conditions:
\begin{itemize}

\item [(i)] the Dirichlet density of $\cH$ satisfies
\begin{align*}
\delta(\cH) > 1 - \dfrac{1}{\phi(a)q^{\deg(a)}}; \; \text{and}
\end{align*}

\item [(ii)] $\cH$ contains all primes $\wp$ of $\bK$ that divides $a$.

\end{itemize}
Then the equation $C_a(x) = m$ has a solution $x_{\bK}$ in $\bK$ if and only if it has a solution $x_{\wp}$ in the completion $\bK_{\wp}$ of $\bK$ for every prime $\wp$ of $\bK$ with $\wp \in \cH$.

\end{theorem}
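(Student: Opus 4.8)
The plan is to reduce the statement to the key case already handled in Section \ref{Section-The-splitting-field-of-the-polynomial-C-a(x)-=-m}, namely Corollary \ref{Corollary-A-special-case-of-a-Carlitz-module-analogue-of-the-Grunwald-Wang-theorem}, by passing to the cyclotomic extension $\bK(\lambda_a)$. The ``only if'' direction is trivial, so assume $C_a(x)=m$ is locally solvable at every prime in $\cH$. By Corollary \ref{Corollary-The-description-of-the-splitting-field-in-terms-of-the-Dirichlet-density}, the splitting field $\bL$ of $C_a(x)-m$ over $\bK$ equals $\bK(\lambda_a)$. Set $\bK' = \bK(\lambda_a) = \bL$. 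Then $\bK'$ contains a generator of $\Lambda_a$, and the solutions of $C_a(x)=m$ all lie in $\bK'$. The goal is to descend one such solution from $\bK'$ down to $\bK$, and for that I would analyze the Galois group $G = \Gal(\bK'/\bK)$, which by the representation $\Psi$ of Theorem \ref{Theorem-The-description-of-the-splitting-field-of-the-Carlitz-equation} and by $(\ref{Equation-The-intersection-of-Sigma-and-M-is-trivial-in-the-refinement-theorem})$ embeds into $(A/aA)^{\times}$ via $\psi \mapsto b_{\psi}$.

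The first real step is to use the hypothesis that $\bK/k$ is Galois with every prime $P \mid a$ unramified in $\bL = \bK'$. Since $k(\lambda_a)/k$ is the cyclotomic function field ramified only at the primes dividing $a$ (and at $\infty$), and $\bK/k$ is unramified at all primes dividing $a$, the extensions $\bK/k$ and $k(\lambda_a)/k$ are ``arithmetically disjoint'' at the primes dividing $a$; I would leverage this to show $\bK \cap k(\lambda_a) = k$, or at least that $[\bK(\lambda_a):\bK] = [k(\lambda_a):k] = \phi(a)$, so that $\Gal(\bK(\lambda_a)/\bK) \cong \Gal(k(\lambda_a)/k) \cong (A/aA)^{\times}$ via restriction. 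The precise care needed here about the place $\infty$ and about whether $\bL = \bK'$ is exactly $\bK(\lambda_a)$ (it is, by the first paragraph) is where I expect the bookkeeping to be heaviest; the hypothesis ``unramified in $\bL$'' rather than merely ``in $\bK$'' is presumably there precisely to make this identification clean.

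Next, having identified $G = \Gal(\bK'/\bK)$ with $(A/aA)^{\times}$, I would exploit the extra information that $C_a(x)=m$ is solvable in $\bK_{\wp}$ for \emph{every} prime $\wp \mid a$ (hypothesis (ii)), together with solvability at a density-$>1-1/\phi(a)q^{\deg(a)}$ set of primes. For an unramified prime $\wp$ of $\bK$ with Frobenius $\psi \in G$ acting on $\bK' = \bK(\lambda_a)$, local solvability at $\wp$ forces $\psi$ to fix a solution $C_b(\lambda_a)+h$, and one computes as in the proof of Theorem \ref{Theorem-The-description-of-the-splitting-field-of-the-Carlitz-equation} that $C_{b_\psi \cdot b - b}(\lambda_a) = -C_{u_\psi}(\lambda_a)$, i.e. $(b_\psi - 1)b \equiv -u_\psi \pmod a$ in the sense of the Carlitz action. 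Running this over the set of Frobenius elements realized by primes in $\cH$ — which by Tchebotarev and the density bound must exhaust all conjugacy classes, hence (the group being abelian via the embedding into $(A/aA)^{\times}$) all of $G$ — I would extract that $u_\psi = 0$ for all $\psi$, equivalently $\psi(h) = h$ for a fixed root $h$, so $h \in \bK$. This is the crux: the density hypothesis guarantees every element of $G$ is hit as a Frobenius, and then the single fixed solution is fixed by the whole group.

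The main obstacle, I expect, is the middle step: making rigorous the identification $\Gal(\bK(\lambda_a)/\bK) \cong (A/aA)^{\times}$ from the unramifiedness hypothesis, since this is a statement about linear disjointness of $\bK$ and the cyclotomic function field that must be squeezed out of ramification data at the finitely many primes dividing $a$ (and the behavior at $\infty$). Once that is in hand, combining the ``every Frobenius is realized'' consequence of the density bound with the local solvability at primes dividing $a$ (needed to handle those finitely many ramified or bad primes that Tchebotarev cannot reach) should close the argument by a counting/averaging argument inside the abelian group $(A/aA)^{\times}$, mirroring the Ono--Terasoma strategy mentioned in the introduction.
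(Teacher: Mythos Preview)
Your proposal contains a genuine gap at the crucial step. From the density hypothesis and Tchebotarev you correctly deduce that every $\psi\in G=\Gal(\bL/\bK)$ arises as a Frobenius at some $\wp\in\cH$, and that the local solution $x_\wp=C_b(\lambda_a)+h$ is then fixed by $\psi$; this yields $(b_\psi-1)b\equiv -u_\psi\pmod a$ for \emph{some} $b=b(\psi)$ depending on $\psi$. But from this you conclude ``$u_\psi=0$ for all $\psi$, equivalently $\psi(h)=h$,'' and that does not follow. What you have shown is only that each $\psi$ fixes \emph{some} solution in $\cS$, i.e.\ that the restriction of the cocycle $\psi\mapsto u_\psi$ to every cyclic subgroup is a coboundary. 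Producing a \emph{common} fixed point --- equivalently, a single $c\in A/aA$ with $u_\psi=-(b_\psi-1)c$ for all $\psi$ simultaneously --- is exactly the content of the theorem, and your ``counting/averaging'' remark does not supply it. The reference to hypothesis~(ii) at the end is too vague to close this gap; as written, your argument never actually uses local solvability at the primes above the $P_i\mid a$.

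The paper proceeds quite differently and this is where hypothesis~(ii) does the real work. One factors $a=\epsilon\prod_i P_i^{e_i}$, sets $a_i=a/P_i^{e_i}$, and treats each prime power separately. The unramifiedness of $P_i$ in $\bK$ combined with its total ramification in $k(\Lambda_{P_i^{e_i}})$ forces any prime $\fq$ of $\bK$ above $P_i$ to be \emph{totally ramified} in $\bL_i:=\bK(\Lambda_{P_i^{e_i}})$, so the decomposition group at $\fq$ is all of $\Gal(\bL_i/\bK)$. Hypothesis~(ii) gives a local solution $x_\fq\in\bK_\fq$, and then $m_i:=C_{a_i}(x_\fq)$ is a root of $C_{P_i^{e_i}}(z)=m$ lying in $\bK_\fq\cap\bL_i$, the fixed field of that decomposition group, which is $\bK$ itself. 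Having $m_i\in\bK$ for each $i$, one uses the Chinese Remainder Theorem to find $\epsilon\in A$ with $\epsilon\equiv b_i\pmod{P_i^{e_i}}$ and checks that $x_\bK:=C_\epsilon(\lambda_a)+h$ satisfies $C_{a_i}(x_\bK)=m_i\in\bK$; finally B\'ezout for the coprime $a_i$ gives $x_\bK\in\bK$. So the mechanism is not ``every Frobenius fixes something, hence a common fixed point,'' but rather ``at each $P_i$ the decomposition group is the whole group, so the local solution there is already global for that component.'' Your linear-disjointness observation $\bK\cap k(\lambda_a)=k$ is correct and is implicitly what makes the ramification-index computation $\e(\fQ/\fq)=\phi(P_i^{e_i})$ go through, but it is used locally at each $P_i$, not to run a global cocycle argument.
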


\begin{proof}

If $C_a(x) = m$ has a solution $x_{\bK}$ in $\bK$, then it follows immediately that $C_a(x) = m$ has a solution $x_{\wp}$ in the completion $K_{\wp}$ for every prime $\wp$ in $\bK$.

Suppose that $C_a(x) = m$ has a solution $x_{\wp}$ in the completion $\bK_{\wp}$ for each prime $\wp \in \cH$. We will prove that the equation $C_a(x) = m$ has a solution $x_{\bK} \in \bK$.

By assumption, we know from Corollary \ref{Corollary-The-description-of-the-splitting-field-in-terms-of-the-Dirichlet-density} that $\bL = \bK(\lambda_a)$.

Let $h$ be an element in $k^{\alg}$ such that $C_a(h) = m$, and let $\cS$ be the set defined by $(\ref{Equation-The-set-S-of-all-solutions-to-the-equation-C-a-(x)=m})$ in the proof of Theorem \ref{Theorem-The-description-of-the-splitting-field-of-the-Carlitz-equation}. Since $a$ is a polynomial of positive degree, we can write $a$ in the form
\begin{align}
\label{Equation-The-representation-of-a}
a = \epsilon P_1^{e_1}\cdots P_n^{e_n},
\end{align}
where $\epsilon$ is in $\bF_q^{\times}$, the $P_i$ are distinct monic primes in $A$, and the $e_i$ are positive integers. For each $1 \le i \le n$, set
\begin{align}
\label{Equation-The-equation-of-a-i-dividing-a}
a_i := \dfrac{a}{P_i^{e_i}} \in A,
\end{align}

We prove that for each $1 \le i \le n$, there exists an element $b_i \in A$ with $\deg(b_i) < \deg(a)$ such that $C_{a_i}(C_{b_i}(\lambda_a) + h) \in \bK$. Indeed, take an integer $1 \le i \le n$. Set
\begin{align*}
\Lambda_{P_i^{e_i}} = \left\{\gamma \in k^{\alg} \; | \; C_{{P_i}^{e_i}}(\gamma) = 0 \right\}.
\end{align*}
We know from \cite[Proposition {\bf12.7}]{Rosen} that $k(\Lambda_{P_i^{e_i}})$ is a Galois extension of $k$.

Let $\fq$ be a prime of $\bK$ lying above $(P_i) = P_iA$, and let $\fQ$ be a prime of $\bK(\Lambda_{P_i^{e_i}})$ lying above $\fq$. Let $\fB$ be a prime of $k(\Lambda_{P_i^{e_i}})$ lying below $\fQ$. Then $\fB$ lies above $(P_i)$.
\begin{center}
\begin{tikzpicture}[node distance = 2cm, auto]
\node (k) {$(P_i) \subset k$};
\node (K) [above of=k, left of=k] {$\fq \subset \bK$};
\node (k1) [node distance = 1cm, above of=k, right of=k, right of =k, right of =k] {$\fB \subset k(\Lambda_{P_i^{e_i}})$};
\node (K1) [above of=k, node distance = 4cm] {$\fQ \subset \bK(\Lambda_{P_i^{e_i}})$};
\draw[-] (k) to node {} (K);
\draw[-] (k) to node [swap] {} (k1);
\draw[-] (K) to node {} (K1);
\draw[-] (k1) to node [swap] {} (K1);
\end{tikzpicture}
\end{center}

We prove that $\fq$ is totally ramified in $\bK(\Lambda_{P_i^{e_i}})$. Let $\e(\fQ/\fq)$, $\e(\fq/(P_i))$ be the ramification indices of $\fQ$ over $\fq$ and $\fq$ over $(P_i)$, respectively. Let $\e(\fQ/\fB)$, $\e(\fB/(P_i))$ be the ramification indices of $\fQ$ over $\fB$ and $\fB$ over $(P_i)$, respectively. Let $\e(\fQ/(P_i))$ be the ramification index of $\fQ$ over $(P_i)$. We know from \cite[Proposition {\bf12.7}]{Rosen} that the prime ideal $(P_i) = P_iA$ is totally ramified in the extension $k(\Lambda_{P_i^{e_i}})/k$, and thus
\begin{align*}
\e(\fB/(P_i)) = [k(\Lambda_{P_i^{e_i}}) : k] = \phi(P_i^{e_i}),
\end{align*}
where $\phi(\cdot)$ denotes the function field analogue of the Euler $\phi$-function (see, for example, \cite[Proposition {\bf1.7}]{Rosen} for the definition of $\phi(\cdot)$). By assumption, we know that $(P_i)$ is unramified in $\bK$, and thus $\e(\fq/(P_i)) = 1$. Since $K(\Lambda_{P_i^{e_i}}) = K\cdot k(\Lambda_{P_i^{e_i}})$, it thus follows from \cite[Lemma {\bf4.6.3}]{Koch} that $\fB$ is unramified in $\bK(\Lambda_{P_i^{e_i}})$. Hence $\e(\fQ/\fB) = 1$, and we deduce that
\begin{align}
\label{Equation-The-ramification-index-of-fQ/fq}
\e(\fQ/\fq) = \e(\fQ/\fq)\e(\fq/(P_i)) = \e(\fQ/(P_i)) = \e(\fQ/\fB)\e(\fB/(P_i)) = \e(\fB/(P_i)) = \phi(P_i^{e_i}).
\end{align}
We contend that the degree of $\bK(\Lambda_{P_i^{e_i}})$ over $\bK$ is less than or equal to $\phi(P_i^{e_i})$. Indeed, let $\chi$ be a primitive generator of $\Lambda_{P_i^{e_i}}$. We know from \cite[Theorem {\bf12.3.16}]{Villa-Salvador} that $\chi$ is a root of the $P_i^{e_i}$--th cyclotomic polynomial $\Phi_{P_i^{e_i}}(x) \in k[x] \subset \bK[x]$ of degree $\phi(P_i^{e_i})$. Hence we deduce that
\begin{align*}
[\bK(\Lambda_{P_i^{e_i}}) : \bK] = [\bK(\chi) : \bK] \le \deg\left(\Phi_{P_i^{e_i}}(x)\right) = \phi(P_i^{e_i}).
\end{align*}
Therefore it follows from $(\ref{Equation-The-ramification-index-of-fQ/fq})$ that
\begin{align*}
\phi(P_i^{e_i}) = \e(\fQ/\fq) \le [\bK(\Lambda_{P_i^{e_i}}) : \bK] \le \phi(P_i^{e_i}),
\end{align*}
and hence
\begin{align*}
\e(\fQ/\fq) = [\bK(\Lambda_{P_i^{e_i}}) : \bK] = \phi(P_i^{e_i}).
\end{align*}
Thus $\fq$ is totally ramified in $\bK(\Lambda_{P_i^{e_i}})$.

We now consider the equation
\begin{align}
\label{Equation-The-equation-C-P^e-x-m}
C_{P_i^{e_i}}(z) = m.
\end{align}
Let $\bL_i$ be the splitting field of the polynomial $C_{P_i^{e_i}}(z) - m \in \bK[z]$ over $\bK$. Since the polynomial $C_{P_i^{e_i}}(z) - m$ is separable, we see that $\bL_i$ is a Galois extension of $\bK$.

We prove that the equation given by $(\ref{Equation-The-equation-C-P^e-x-m})$ has a solution $z_{\wp}$ in the completion $\bK_{\wp}$ for each prime $\wp \in \cH$. Indeed take any prime $\wp \in \cH$. We know that there exists an element $x_{\wp}$ in $\bK_{\wp}$ such that $C_a(x_{\wp}) = m$. Since $a = P_i^{e_i}a_i$, we deduce that
\begin{align}
\label{Equation-The-equation-C-P^e(x)-equals-m-is-locally-solvable}
C_{P_i^{e_i}}(C_{a_i}(x_{\wp})) = C_{P_i^{e_i}a_i}(x_{\wp}) = C_a(x_{\wp}) = m.
\end{align}
Since $C_{a_i}(x) \in k[x] \subset \bK[x]$ and $\bK \subset \bK_{\wp}$, we see that $C_{a_i}(x_{\wp}) \in \bK_{\wp}$. Upon letting $z_{\wp} = C_{a_i}(x_{\wp}) \in \bK_{\wp}$, we see from $(\ref{Equation-The-equation-C-P^e(x)-equals-m-is-locally-solvable})$ that the equation $C_{P_i^{e_i}}(z) = m$ has a solution $z_{\wp}$ in $\bK_{\wp}$. Therefore the equation $C_{P^e}(z) = m$ has a solution $z_{\wp}$ in the completion $\bK_{\wp}$ for each prime $\wp \in \cH$. Applying Corollary \ref{Corollary-The-description-of-the-splitting-field-in-terms-of-the-Dirichlet-density} for the equation $C_{P_i^{e_i}}(z) = m$ with $P_i^{e_i}, m, \bL_i, \chi$ in the roles of $a, m, \bL, \lambda_a$, respectively, where we recall that $\chi$ is a primitive generator of $\Lambda_{P_i^{e_i}}$, we deduce that $\bL_i = \bK(\chi) = \bK(\Lambda_{P_i^{e_i}})$, and thus
\begin{align}
\label{Equation-The-Galois-group-of-L-i-over-k}
\Gal(\bL_i/\bK) = \Gal(\bK(\Lambda_{P_i^{e_i}})/\bK).
\end{align}

Recall that we have shown that $\fq$ is totally ramified in $ \bK(\Lambda_{P_i^{e_i}})$ and the prime $\fQ$ lies above $\fq$. Let $\cD(\fQ/\fq) \le \Gal(\bK(\Lambda_{P_i^{e_i}})/\bK)$ be the decomposition group of $\fQ$ over $\fq$. Since $\fq$ is totally ramified in $\bK(\Lambda_{P_i^{e_i}})$, we deduce from $(\ref{Equation-The-Galois-group-of-L-i-over-k})$ that
\begin{align}
\label{Equation-The-decomposition-group-of-fQ-over-fq}
\cD(\fQ/\fq) = \Gal(\bK(\Lambda_{P_i^{e_i}})/\bK) = \Gal(\bL_i/\bK).
\end{align}

By assumption $(ii)$ in the statement of Theorem \ref{Theorem-A-Carlitz-module-analogue-of-the-Grunwald-Wang-theorem}, there exists an element $x_{\fq}$ in the completion $\bK_{\fq}$ such that $C_a(x_{\fq}) = m$. Hence $x_{\fq}$ belongs to the set $\cS$, where $\cS$ is the set given by $(\ref{Equation-The-set-S-of-all-solutions-to-the-equation-C-a-(x)=m})$ in the proof of Theorem \ref{Theorem-The-description-of-the-splitting-field-of-the-Carlitz-equation}. Thus $x_{\fq} = C_{b_i}(\lambda_a) + h$ for some element $b_i \in A$ with $\deg(b_i) < \deg(a)$. We see that
\begin{align*}
C_{P_i^e}(C_{a_i}(x_{\fq})) = C_{P_i^{e_i}a_i}(x_{\fq}) = C_a(x_{\fq}) = m,
\end{align*}
and thus $C_{a_i}(x_{\fq})$ is a root of the polynomial $C_{P_i^{e_i}}(z) - m \in \bK[z]$. Since $\bL_i = \bK(\Lambda_{P_i^{e_i}})$ is the splitting field of the polynomial $C_{P_i^e}(z) - m$ over $\bK$, we deduce that
\begin{align}
\label{Equation-The-field-of-definition-of-C-ai-x-fq}
m_i := C_{a_i}(x_{\fq}) = C_{a_i}(C_{b_i}(\lambda_a) + h) \in \bK_{\fq} \cap \bL_i.
\end{align}

Since $\bK_{\fq} \cap \bL_i$ is the fixed field of $\cD(\fQ/\fq)$ (see \cite[Proposition {\bf9.8}]{Neukirch}), we see from $(\ref{Equation-The-field-of-definition-of-C-ai-x-fq})$ that $\psi(m_i) = m_i$ for every $\psi \in \cD(\fQ/\fq)$. By $(\ref{Equation-The-decomposition-group-of-fQ-over-fq})$, we know that $\cD(\fQ/\fq) = \Gal(\bL_i/\bK)$, and hence $m_i$ belongs to $\bK$.

In summary, we have shown that each $1 \le i \le n$, there exists an element $b_i \in A$ with $\deg(b_i) < \deg(a)$ such that $m_i = C_{a_i}(C_{b_i}(\lambda_a) + h) \in \bK$.

Since the elements $P_i^{e_i}$ are pairwise coprime in the principal ideal domain $A$, it follows from the Chinese Remainder Theorem that there exists an element $\epsilon$ in $A$ such that
\begin{align}
\label{Equation-The-element-epsilon-in-A}
\epsilon \equiv b_i \pmod{P_i^{e_i}}
\end{align}
for each $1 \le i \le n$. Set
\begin{align}
\label{Equation-The-equation-of-x-bK}
x_{\bK} := C_{\epsilon}(\lambda_a) + h.
\end{align}
We prove that $x_{\bK}$ belongs to $\bK$, and is a solution to the equation $C_a(x) = m$. We first show that $C_{a_i}(x_{\bK}) = m_i$ for each $1 \le i \le n$.

Take any integer $1 \le i \le n$. By $(\ref{Equation-The-element-epsilon-in-A})$, there exists an element $\delta_i \in A$ such that $\epsilon = b_i + \delta_iP_i^{e_i}$. It thus follows from $(\ref{Equation-The-equation-of-x-bK})$ and $(\ref{Equation-The-equation-of-a-i-dividing-a})$ that
\begin{align*}
C_{a_i}(x_{\bK}) &= C_{a_i}(C_{\epsilon}(\lambda_a) + h) \\
&= C_{a_i}(C_{b_i + \delta_iP_i^{e_i}}(\lambda_a) + h) \\
&= C_{a_i}(C_{b_i}(\lambda_a) + h) + C_{a_i}(C_{\delta_iP_i^{e_i}}(\lambda_a))\\
&= m_i + C_{\delta_i a_iP_i^{e_i}}(\lambda_a) \\
&= m_i + C_{\delta_i a}(\lambda_a) \\
&= m_i + C_{\delta_i}(C_a(\lambda_a)) \\
&= m_i.
\end{align*}
In particular, this implies that $C_{a_i}(x_{\bK}) = m_i \in \bK$ for every $1 \le i \le n$.

By $(\ref{Equation-The-equation-of-a-i-dividing-a})$, we deduce that $\gcd(a_1, a_2, \ldots, a_n) = 1$, and thus there exist elements $\upsilon_1, \upsilon_2, \ldots, \upsilon_n$ in $A$ such that
\begin{align*}
\upsilon_1 a_1 + \upsilon_2 a_2 + \ldots + \upsilon_n a_n = 1.
\end{align*}
Since $C_{\upsilon_i}(x) \in k[x] \subset \bK[x]$ for every $1 \le i \le n$, we deduce that
\begin{align*}
x_{\bK} = C_1(x_{\bK}) = C_{\upsilon_1 a_1 + \upsilon_2 a_2 + \cdots + \upsilon_n a_n}(x_{\bK}) = C_{\upsilon_1}(C_{a_1}(x_{\bK})) + \cdots + C_{\upsilon_n}(C_{a_n}(x_{\bK})) \in \bK.
\end{align*}
Furthermore since $m_1 = C_{a_1}(C_{b_1}(\lambda_a) + h)$ is a solution to the equation $C_{P_1^{e_1}}(z) = m$, we deduce that
\begin{align*}
C_{a}(x_{\bK}) = C_{P_1^{e_1}a_1}(x_{\bK}) = C_{P_1^{e_1}}(C_{a_1}(x_{\bK})) = C_{P_1^{e_1}}(m_1) = m,
\end{align*}
which proves that the equation $C_a(x) = m$ has a solution $x_{\bK} \in \bK$. Thus our contention follows.

\end{proof}

Let $\cP_{\bK}$ be the set of all primes in $\bK$. If $\cH$ is a set of primes of $\bK$ such that $\cP_{\bK}\setminus \cH$ is finite and $\cH$ contains all primes of $\bK$ that divide $a$, then $\cH$ is of Dirichlet density $1$. Hence we obtain the following result that is a special case of Theorem \ref{Theorem-A-Carlitz-module-analogue-of-the-Grunwald-Wang-theorem}.
\begin{corollary}

We maintain the same notation as in Theorem \ref{Theorem-The-description-of-the-splitting-field-of-the-Carlitz-equation}. Assume that $\bK$ is a Galois extension of $k$ such that every prime $P$ dividing $a$ is unramified in $\bK$. Then the equation $C_a(x) = m$ has a solution $x_{\bK}$ in $\bK$ if and only if it has a solution $x_{\wp}$ in the completion $\bK_{\wp}$ of $\bK$ for all primes $\wp \in \cH$, where $\cH$ is a set of primes of $\bK$ such that $\cP_{\bK}\setminus \cH$ is finite and $\cH$ contains all primes of $\bK$ that divide $a$.

\end{corollary}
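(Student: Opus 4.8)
The plan is to obtain this corollary as an immediate special case of Theorem~\ref{Theorem-A-Carlitz-module-analogue-of-the-Grunwald-Wang-theorem}; all of the hypotheses of that theorem are either already assumed here or follow at once, so the proof reduces to matching notation together with one elementary density observation. The forward implication requires nothing: if $C_a(x) = m$ has a solution $x_{\bK}$ in $\bK$, then, viewing $x_{\bK}$ inside $\bK_{\wp}$ via the natural embedding $\bK \hookrightarrow \bK_{\wp}$, it is a solution of $C_a(x) = m$ in $\bK_{\wp}$ for \emph{every} prime $\wp$ of $\bK$, hence in particular for every $\wp \in \cH$.

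For the converse I would first check that the present hypothesis on $\cH$ forces the density bound required by Theorem~\ref{Theorem-A-Carlitz-module-analogue-of-the-Grunwald-Wang-theorem}, by showing in fact that $\delta(\cH) = 1$. Since $\cP_{\bK}\setminus\cH$ is \emph{finite}, the numerator sum $\sum_{\wp \in \cP_{\bK}\setminus\cH} N\wp^{-s}$ stays bounded as $s \to 1^{+}$, while $\sum_{\wp \in \cP_{\bK}} N\wp^{-s} \to +\infty$ as $s \to 1^{+}$ (the standard divergence underlying the Dirichlet density, cf.\ \cite[Chapter~{\bf9}]{Rosen}); hence, directly from the definition $(\ref{Equation-The-limit-in-the-definition-of-the-Dirichlet-density})$,
\begin{align*}
\delta(\cH) \;=\; 1 - \lim_{s \to 1^{+}}\dfrac{\sum_{\wp \in \cP_{\bK}\setminus\cH} N\wp^{-s}}{\sum_{\wp \in \cP_{\bK}} N\wp^{-s}} \;=\; 1 - 0 \;=\; 1.
\end{align*}
Because $a$ has positive degree, $\phi(a)q^{\deg(a)} \ge 1$, so $\delta(\cH) = 1 > 1 - 1/\phi(a)q^{\deg(a)}$, which is condition~$(i)$ of Theorem~\ref{Theorem-A-Carlitz-module-analogue-of-the-Grunwald-Wang-theorem}. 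Condition~$(ii)$ of that theorem --- that $\cH$ contains every prime of $\bK$ dividing $a$ --- is part of the hypothesis on $\cH$ imposed here, and the assumptions that $\bK/k$ is Galois with every prime dividing $a$ unramified are exactly the remaining hypotheses of the theorem. Applying Theorem~\ref{Theorem-A-Carlitz-module-analogue-of-the-Grunwald-Wang-theorem} to this set $\cH$ then yields a solution $x_{\bK} \in \bK$ of $C_a(x) = m$, which is what is required.

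There is essentially no obstacle here; the only point needing a little care is the density statement, where one should argue from the precise form of the limit $(\ref{Equation-The-limit-in-the-definition-of-the-Dirichlet-density})$ that deleting a finite set of primes from $\cP_{\bK}$ leaves the Dirichlet density unchanged, rather than merely asserting it, and then confirm that the Galois and unramifiedness hypotheses stated here are literally those that Theorem~\ref{Theorem-A-Carlitz-module-analogue-of-the-Grunwald-Wang-theorem} requires.
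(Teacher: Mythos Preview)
Your proposal is correct and takes essentially the same approach as the paper, which derives the corollary in one line by observing that a cofinite set of primes has Dirichlet density $1$ and then invoking Theorem~\ref{Theorem-A-Carlitz-module-analogue-of-the-Grunwald-Wang-theorem}. One small caution on your final paragraph: as literally stated, Theorem~\ref{Theorem-A-Carlitz-module-analogue-of-the-Grunwald-Wang-theorem} asks for the primes dividing $a$ to be unramified in $\bL$, not in $\bK$, so the hypotheses are not \emph{verbatim} identical --- but the proof of that theorem uses only unramifiedness in $\bK$, so the application goes through exactly as you say.
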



\begin{thebibliography}{179}



\bibitem{Carlitz} {\sc L. Carlitz}, \emph{A class of polynomials}, Trans. Amer. Math. Soc. {\bf43} (1938), pp. 167--182.


\bibitem{Goss} {\sc D. Goss}, \emph{Basic structures of function field arithmetic}, Ergebnisse der Mathematik und ihrer Grenzgebiete ({\bf3}) [Results in Mathematics and Related Areas ({\bf3})], {\bf35}, Springer-Verlag, Berlin, (1996).


\bibitem{Hayes} {\sc D.R. Hayes}, \emph{Explicit class field theory for rational function fields}, Trans. Amer. Math. Soc. {\bf189} (1974), pp. 77--91.


\bibitem{Koch} {\sc H. Koch}, \emph{Number Theory}, Algebraic numbers and functions. Translated from the 1997 German original by David Kramer. Graduate Studies in Mathematics, {\bf24}. American Mathematical Society, Providence, RI, (2000).


\bibitem{Milne-CFT} {\sc J.S. Milne}, \emph{Class Field Theory}, Available at \url{http://www.jmilne.org/math/CourseNotes/CFT.pdf}.



\bibitem{Neukirch} {\sc J. Neukirch}, \emph{Algebraic number theory}, Translated from the 1992 German original and with a note by Norbert Schappacher. With a foreword by G. Harder. Grundlehren der Mathematischen Wissenschaften [Fundamental Principles of Mathematical Sciences], {\bf322}. Springer-Verlag, Berlin, (1999).


\bibitem{Ono-Terasoma} {\sc T. Ono and T. Terasoma}, \emph{On Hasse principle for $x^n = a$}, Proc. Japan Acad. Ser. A Math. Sci. {\bf73} (1997), no. {\bf7}, 143--144.


\bibitem{Ono} {\sc T. Ono}, \emph{A note on Shafarevich--Tate sets for finite groups}, Proc. Japan Acad. Ser. A Math. Sci. {\bf74} (1998), no. {\bf5}, 77--79.




\bibitem{Rosen} {\sc M. Rosen}, \emph{Number theory in function fields}, Graduate Texts in Mathematics, {\bf210}. Springer-Verlag, New York (2002).


\bibitem{Thakur-book} {\sc D.S. Thakur}, \emph{Function Field Arithmetic}, World Scientific Publishing Co., Inc., River Edge, NJ, (2004).



\bibitem{van-der-Heiden} {\sc G.-J. van der Heiden}, \emph{Local-global problem for Drinfeld modules}, J. Number Theory {\bf104} (2004), no. {\bf2}, 193--209.




\bibitem{Villa-Salvador} {\sc G.D. Villa Salvador}, \emph{Topics in the theory of algebraic function fields}, Mathematics: Theory \& Applications. Birkh\"auser Boston, Inc., Boston, MA (2006).



\end{thebibliography}
\end{document}